\newtheorem{thm}{Theorem}[section]
\newtheorem{lem}[thm]{Lemma}
\newtheorem{prop}[thm]{Proposition}
\theoremstyle{definition}
\newtheorem{defn}[thm]{Definition}
\theoremstyle{remark}
\newtheorem{rem}[thm]{Remark}
\newtheorem{exa}[thm]{Example}
\numberwithin{equation}{section}
\newcommand{\dbracc}[1]{[\kern-0.15em[ #1 ]\kern-0.15em]}
\newcommand{\dbraco}[1]{[\kern-0.15em[ #1 [\kern-0.15em[}
\newcommand{\dbraoc}[1]{]\kern-0.15em] #1 ]\kern-0.15em]}
\newcommand{\dbraoo}[1]{]\kern-0.15em] #1 [\kern-0.15em[}
\newcommand{\be}{\begin{equation}}
\newcommand{\ee}{\end{equation}}
\newcommand{\ba}{\begin{aligned}}
\newcommand{\ea}{\end{aligned}}
\begin{document}
\title[Analyticity in optimal investment and stochastic dominance]{%
On the analyticity of the value function in optimal investment and stochastically  dominant markets}
\author{Oleksii Mostovyi}
\thanks{The first author has been supported by the National Science
Foundation under grant No. DMS-1848339 and the second author has
been supported by the National Science Foundation under grant No.
DMS-1908903.}
\address{Oleksii Mostovyi, Department of Mathematics, University of
Connecticut, Storrs, CT 06269, United States}
\email{oleksii.mostovyi@uconn.edu}
\author{Mihai S\^irbu}
\address{Mihai S\^{i}rbu, Department of Mathematics, The
University of Texas at Austin, Austin, TX 78712, United States}
\email{sirbu@math.utexas.edu}
\author{Thaleia Zariphopoulou}
\address{Thaleia Zariphopoulou, Department of Mathematics and IROM, 
The University of Texas at Austin, Austin, TX 78712, United States}
\email{zariphop@math.utexas.edu}
\subjclass[2010]{91G10, 93E20. \textit{JEL Classification:} C61, G11.}
\keywords{optimal investment with completely monotonic functions, Bernstein
theorem and utility maximization, completely monotonic functions, Bernstein
functions, stochastic dominance.}
\date{\today }
\maketitle

\begin{abstract} We study the analyticity of the value function in optimal investment with
expected utility from terminal wealth  and the relation to  stochastically dominant financial models. We identify both a class of utilities and a class of semi-martingale models for which we establish analyticity. Specifically, these utilities have completely monotonic inverse marginals, while the market models have a maximal element in the sense of infinite-order stochastic dominance. We construct two counterexamples, themselves of independent interest, which show that analyticity fails if either the utility or the market model does not belong to the respective special class. We also provide  explicit formulas for the derivatives, of all orders, of the value functions as well as their optimizers. Finally, we show that for the  set of supermartingale deflators, stochastic dominance of infinite order is equivalent to the apparently stronger dominance of second order.
\end{abstract}

\section{Introduction}

\textcolor{black}{The highest level of regularity that a function can posses is analyticity, which is also a highly desirable property that allows for classical approximation and representation results, and that emerges in many different branches of mathematics. 
 It is the ultimate level of smoothness that  is even stronger than infinite differentiability, as  the celebrated example in \cite{Osg},  given by $C^\infty$ but not analytic function $x\to e^{-1/x^2}1_{(0,\infty)}(x)$, $x\in\mathbb R$, demonstrates.}

\textcolor{black}{The regularity of value functions is a foundational and rather challenging topic in stochastic control  especially in general non-Markovian settings.  
A premier example here is the celebrated optimal investment problem in semimartingale market models. 
 Herein, we study the   {\it maximal} regularity, }  namely 
analyticity, 
of the value function
(indirect utility), $u,$ in  \textcolor{black}{such} optimal investment problems,  
given by
\begin{equation}
u(x):=\sup \limits_{X\in \mathcal{X}(x)}\mathbb{E}\left[ U(X_{T})\right]
,\quad x>0,  \label{general}
\end{equation}%
where $U:\left(0,\infty \right) \rightarrow \mathbb{R}$ is the utility
function and $\mathcal{X}(x)$ the set of admissible wealth processes
starting at $x;$ see section \ref{secMarket} for the precise definition. Problem \eqref{general} is of fundamental importance in mathematical finance and it was analyzed in a number of seminal papers (see, among
others, \cite{KLS}, \cite{KLSX}, and \cite{KS}).

 The need to study higher (even maximal) regularity for  the value function $u$  is not a purely mathematical question. The economics literature displays clearly that an investor's risk  may be encoded in  higher (than second) order behavior of the utility function $U$: see, for example,  \cite{econ4}, \cite{econ2}, \cite{econ1}, and \cite{econ3}. 
The question of whether the  indirect utility $u$ inherits all the properties of $U$ may appear a rather  abstract task at first that is asking   if the map $U\rightarrow u$ has some semigroup/invariance properties. 

 As the indirect utility, $u$, may become the \emph{direct} utility of a new optimization problem (for example, in a sequential model, or in indifference valuation, etc.), preserving the properties of $U$ to $u$  may go beyond describing the higher-order risk. Thus, \cite{Taylor1}, \cite{GR12}, \cite{GR15} use series expansions to investigate various properties in optimal investment. Therefore, the representability of an (indirect) utility as a series becomes central for the expansion-based approaches. Analyticity of the underlying utility\footnote{And not only  analyticity for coefficients of the driving dynamics as in \cite{MelamudTrub}.} plays an important role in the studies of endogenous completeness, see \cite{AndRaim} and \cite{MelamudTrub}. 
Detailed arguments why  $u$ needs to  have similar properties to  $U$ (so it can be a utility itself)  may also be found in a number of academic works; see, for example,  \cite{KLS}, \cite{KLSX}, and \cite{KS}.


\textcolor{black}{
Mathematically, it is well-known that for the value function to inherit the regularity of  original utility is an extremely subtle issue. This  is  illustrated by our counterexamples below, in the context of optimal investment. 
}
Under rather general assumptions, the authors in \cite{KS, KS2003} showed that if $%
U\in C^{1}\left( \left( 0,\infty \right) \right) $, so is $u.$ For higher-order regularity, there are two types of results. If the utility function $U$
is either power or logarithmic, the value function also inherits this form.
This is a direct consequence of homotheticity and holds under minimal model
assumptions. 

For utilities beyond homothetic ones, extra model conditions are needed even to obtain second-order differentiability of $u$ when $U$ is twice differentiable (see \cite{KS2006}). To our knowledge, no
other regularity results exist to date. We are then motivated to ask the
following question:

\textit{Can we identify both a class of semi-martingale market models and a
class of utility functions such that the value function in (\ref{general})\
retains the (highest possible) regularity of the utility function?}

We propose i) the class of market models which possess a non-zero dual
maximal element in the sense of infinite-degree stochastic dominance and ii)
the class of utilities whose inverse marginal is a completely monotonic
function (and thus analytic). We denote these classes by $\mathcal{SD}\left(
\infty \right) $ and $\mathcal{CMIM},$ respectively. 

We establish the following result: \textit{If the market model is in }$%
\mathcal{SD}\left( \infty \right) $\textit{\ and the utility is in }$%
\mathcal{CMIM},$\textit{\ then the value function }$u$ \textit{is also in} $%
\mathcal{CMIM}$ \textit{and} \textit{is, thus, analytic}$.$\textit{\ In other
words, we show that, for such market models and such analytic utilities, the
associated indirect utility inherits the analyticity and, furthermore,
remains in the same }$\mathcal{CMIM}$ class.

We also examine the necessity of these classes of models and utilities. We
provide two counterexamples, showing that the results fail outside the
family of $\mathcal{SD}\left( \infty \right) $ models and/or the utility
class $\mathcal{CMIM}.$ 

In the first counterexample, we construct a market model in $\mathcal{SD}%
\left( \infty \right) $ and an analytic, but not in $\mathcal{CMIM}$,
utility, and show that the value function is not infinitely differentiable
(and, thus, not analytic). In the second counterexample, we show that for
any non-homothetic $\mathcal{CMIM\ }$utility (actually, the utility being two-times differentiable suffices), there exists a market model 
outside the $\mathcal{SD}\left( \infty \right) $ class, for which the value
function is not even twice differentiable.

As mentioned above, under minimal model assumptions - well beyond the ones
for the $\mathcal{SD}\left( \infty \right) $ class - homothetic utilities
yield homothetic value functions. Such utilities belong to the $\mathcal{CMIM%
}$ family and are analytic, and these properties are also inherited to their
value functions. 
 Finally, we show that the class $\mathcal{SD}\left( \infty \right)$ is precisely $\mathcal{SD}\left( 2 \right) $. This result is of independent interest. It is based on a delicate simultaneous change of measure and num\'eraire combined with an approximation argument relying on a one-point compactification and the monotone class theorem.

The paper is organized as follows: in section \ref{ut-max}, we specify the settings for problem (\ref{general}). In
section \ref{secCMIM}, we discuss the background notions on complete
monotonicity and stochastic dominance and provide their characterizations. In section \ref{secMain}, we introduce the class of market models and
utilities that we propose, followed by the main
results on the analyticity of the value function together with the explicit
expressions for the primal and dual optimizers and their derivatives of all
orders, as well as other regularity results. Section \ref{exampleMihai}
provides a counterexample for non-$\mathcal{CMIM}$ utilities, while section
\ref{secExample2} contains a counterexample for non-$\mathcal{SD(\infty )}$ market models.

\bigskip

\section{The optimal investment problem}

\label{ut-max}

\subsection{The market model}

\label{secMarket} The market consists of a riskless asset, offering zero
interest rate, and $d$ traded stocks, whose price processes form a $d$%
-dimensional semi-martingale $S$ on a complete stochastic basis $(\Omega ,%
\mathcal{F},(\mathcal{F}_{t})_{t\in \lbrack 0,T]},\mathbb{P})$. Here $T\in
(0,\infty )$ is the investment horizon.

A trading strategy $H$ is a predictable and $S$-integrable process. It
generates the wealth process $X:=x+H\cdot S$, starting at $x>0$, which, for
the utilities considered herein, is taken to be non-negative. Using the
notation of \cite{KS}, we denote the set of admissible wealth processes, 
\begin{equation}\label{defX}
\begin{split}
\mathcal{X}(x):=\left \{ X:\right. & ~X_{t}=x+H\cdot S_{t}\geq 0,~~t\in
\lbrack 0,T], \\
& \left. ~for~some~S-integrable~process~H\right \} ,\quad x>0.
\end{split}
\end{equation}

Following \cite{KarKar07}, we say that a sequence $(X^{n})_{n\in \mathbb{N}%
}\subset \mathcal{X}(1)$ generates an \textit{unbounded profit with bounded
risk (UPBR)}, if the family of the random variables $(X_{T}^{n})_{n\in 
\mathbb{N}}$ is unbounded in probability, i.e., if 
\begin{equation*}
\lim \limits_{m\uparrow \infty }\sup \limits_{n\in \mathbb{N}}\mathbb{P}%
\left[ X_{T}^{n}>m\right] >0.
\end{equation*}%
If no such sequence exists, the condition of \textit{no unbounded profit
with bounded risk (NUPBR)} is satisfied. A characterization of \textit{NUPBR}
is given via the dual feasible set, $\mathcal{Y}\left( y\right) $,
introduced in \cite{KS},  
\begin{equation} \label{defY}
\begin{split}
\mathcal{Y}(y):=\left \{ Y:\right. & ~Y_{0}=y~and~XY=(X_{t}Y_{t})_{t\in
\lbrack 0,T]}~is~a~supermartingale \\
& \left. for~every~X\in \mathcal{X}(1)\right \} ,\quad y>0.
\end{split}
\end{equation}%
The elements of $\mathcal{Y}(1)$ are called super-martingale deflators, see 
\cite{KarKar07}. It was established in \cite{KarKar07} that \textit{NUPBR}
is equivalent to the existence of a strictly positive super-martingale
deflator, namely, 
\begin{equation}
\mathcal{Y}(1)~contains~a~strictly~positive~element.  \label{NUPBR}
\end{equation}

In \cite{TakaokaSchweizer} and \cite{KabKarSong16}, it was later proven that 
\textit{NUPBR }is equivalent to the existence of a strictly positive local
martingale deflator; see, also, \cite{MostovyiNUPBR}. Furthermore, it was
shown in \cite{KabKarSong16} that \textit{NUPBR} is equivalent to other
no-arbitrage conditions, such as no arbitrage of the first kind and no
asymptotic arbitrage of the\textit{\ }first kind; we refer the reader to 
\cite[Lemma A.1]{KabKarSong16} for further details.

\subsection{Utility functions}

We recall the standard class of utility functions $U:(0,\infty )\rightarrow 
\mathbb{R}$ which are strictly concave, strictly increasing, continuously
differentiable and satisfy the Inada conditions 
\begin{equation}
\lim \limits_{x\downarrow 0}U^{\prime }(x)=\infty \text{ \ }and\text{ \ }%
\lim \limits_{x\uparrow \infty }U^{\prime }(x)=0.  \label{Inada}
\end{equation}%
To facilitate the upcoming exposition, we will denote the class of all such
utility functions by $-\mathcal{C}$, in that 
\begin{equation*}
U\in -\mathcal{C}\iff -U\in \mathcal{C}.
\end{equation*}

\subsection{Primal problem and the indirect utility}

We recall the optimal investment problem from terminal wealth 
\begin{equation}
u(x):=\sup \limits_{X\in \mathcal{X}(x)}\mathbb{E}\left[ U(X_{T})\right]
,\quad x>0,  \label{primalProblem}
\end{equation}%
where $U\in -\mathcal{C}$ and $\mathcal{X}(x)$ as in (\ref{defX}).

\subsection{Dual problem and the dual function}

For any $U\in -\mathcal{C}$, its Legendre transform is given by 
\begin{equation}
V(y):=\sup \limits_{x>0}\left( U(x)-xy\right) ,\quad y>0,  \label{defV}
\end{equation}%
and, by biconjugacy, 
\begin{equation*}
-\mathcal{C}\ni U\Longleftrightarrow V\in \mathcal{C}.
\end{equation*}%
In turn, we recall the dual value function,  
\begin{equation}
v(y):=\sup \limits_{Y\in \mathcal{Y}(y)}\mathbb{E}\left[ V\left( Y_{T}\right) %
\right] ,\quad y>0.  \label{dualProblem}
\end{equation}%
with $V$ as in (\ref{defV}) and $\mathcal{Y}(y)$ as in (\ref{derY}).

\bigskip 

It was shown in \cite{KarKar07} that condition \textit{NUPBR} is necessary
for the non-degeneracy of problem \eqref{primalProblem} in that, if \textit{NUPBR}
does not hold, then, for any utility function $U$, (\ref{primalProblem}) has
either infinitely many solutions or no solution at all. Specifically, if $%
U(\infty )=\infty $, then $u(x)=\infty ,$ $x>0$. Therefore, either there is
no solution (when the supremum is not attained) or there are infinitely many
solutions (when the supremum is attained). On the other hand, if $U(\infty
)<\infty $, there is no solution.

If condition \textit{NUPBR} holds, problem (\ref{primalProblem}) has a
solution under the weak assumption that the dual value function $v$ in %
\eqref{dualProblem} is finite, i.e., $v(y)<\infty $, $y>0$. In this case,
all standard conclusions of the utility maximization theory hold; see, for
example, \cite{KostasEmery} and \cite{Mostovyi2015} for details. 

\section{Complete monotonicity and stochastic dominance}
\label{secCMIM} 

\subsection{Complete monotonicity}

Completely monotonic functions have been well-studied in the literature, see 
\cite{Widder41} and \cite{BookBernsteinFunctions} for the historic overview
of the development of the subject therein. A function $f:(0,\infty
)\rightarrow \mathbb{R}$ is called \textit{completely monotonic}, denoted by 
$f\in \mathcal{CM},$ if it has derivatives of all orders and 
\begin{equation*}
(-1)^{n}f^{(n)}(x)\geq 0,\text{ }x>0\text{ \ and \ }n=0,1,2,\dots 
\end{equation*}%
Whenever needed, we extend $f$ to $[0,\infty )$ setting $f(0):=\lim%
\limits_{x\downarrow 0}f(x)$, where $f(0)\leq \infty$.

The celebrated Bernstein theorem (see \cite[Theorem 1.4]%
{BookBernsteinFunctions} or \cite[Theorem 12b]{Widder41}) gives a
characterization of completely monotonic functions, stating that $f\in 
\mathcal{CM}$ if and only if 
\begin{equation}\label{254}
f(x)=\int\limits_{0}^{\infty }e^{-xz}d\mu (z),\quad x\geq 0,
\end{equation}%
where $\mu $ is a nonnegative sigma-finite measure on $[0,\infty )$ such
that the integral converges for every $x>0.$

\begin{defn}\label{defD}We define $\mathcal{D}$ to be the class of
functions $W:\left[ 0,\infty \right) \rightarrow \mathbb{R}$, which satisfy \begin{enumerate}
\item $%
-W^{\prime }\in \mathcal{CM}$,
\item
 $W^{\prime }\left( \infty \right) =0$.
 \end{enumerate}
\end{defn}
The reader should note that the definition above is related, but not the same, to what is called in literature a Bernstein functions, see, e.g., \cite[p. 15]{BookBernsteinFunctions}. Bernstein functions  would assume bounds on $W$, but no Inada-type conditions on $W'$.
For a   $W\in  \mathcal{D}$,  we have  $W'(y) = -\int\limits_0^\infty e^{-yz}d\mu(z)$  from the Bernstein representation characterization of completely monotonic functions.  We then deduce that 
$$W'(0+):=\lim\limits_{y\downarrow 0}W'(y) = -\mu([0,\infty))\quad and  \quad W'(\infty):=\lim\limits_{y\uparrow \infty}W'(y) = -\mu(\{0\}).$$
Therefore, the definition of $\mathcal D$  dictates that the measure $\mu$ has no mass at $z=0$, to satisfy  \begin{equation}\label{151}\mu (\{0\})=-W'(\infty)=0.\end{equation} 
We note that the Inada-type  condition $W'(0)=-\infty$ holds if and only if $\mu([0,\infty)) = \mu ((0,\infty))= \infty$,  not assumed for  $W\in \mathcal D$.

\subsection{Monotonicity of finite order}
A weaker notion of complete monotonicity is the monotonicity of finite order. We adopt the slightly more restrictive definition of monotonicity of order $n$ in the paper \cite{83-lorch-neumann} and not the somewhat weaker definitions in the earlier works \cite{56-williamson} and \cite{71-muldoon}. 

A function $f:(0,\infty )\rightarrow \mathbb{R}$ is called 
monotonic of (finite) order $n,$ denoted by $f\in \mathcal{CM}(n),$ if it
has derivatives of order $k=1,2,...,$ $n$ and 
\begin{equation*}
(-1)^{k}f^{(k)}(x)\geq 0,~\text{\ }x>0\quad and\quad k=0,1,2,\dots ,n.
\end{equation*}%
As in the $\mathcal{CM}$ case, whenever needed, we extend $f$ to $[0,\infty )$ by $f(0):=
\lim \limits_{x\downarrow 0}f(x)$, where $f(0)\leq \infty$.
In analogy to the class $\mathcal{D},$ we introduce the following definition.

\begin{defn}\label{defDn}
For $n\geq 1$, 
we define $\mathcal{D(}n\mathcal{)}$ to be the class
of functions $W:\left[ 0,\infty \right) \rightarrow \mathbb{R}$, which satisfy
\begin{enumerate}
\item $-W^{\prime }\in \mathcal{CM}(n-1)$,\item $W^{\prime }\left( \infty
\right) =0.$ 
\end{enumerate}
\end{defn}

We note that $W\in \mathcal D(n)$ is not necessarily strictly decreasing by definition. While we assume that $W'(\infty)=0$ (both to make it similar to the $n=\infty$ case and to simplify the upcoming definition of stochastic dominance),  we do not impose the  condition $W'(0)=\infty$ for $W\in \mathcal D(n)$, nor assume that such a $W$ is bounded below.

\begin{prop}\label{3.3} Fix $W\in \mathcal D(n)$, $n\in\{2,3,\dots\}$. Then,  
\begin{equation}   \label{high-order-null} W'(\infty)=W''(\infty)= \dots =W^{(n-1)}(\infty)=0
\end{equation} and 
$$0\leq -W'(y_1)= \int\limits_{y_1}^{\infty} \dots 
\int\limits_{y_{n-1}}^{\infty} (-1)^nW^{(n)}(y_n)    dy_n 
\dots  dy_2 <\infty,\quad y_1>0.$$
Therefore, any $W\in \mathcal{D}(n)$ has the representation
\begin{equation}
\label{Vn}W(y)=W(y_0)+  \int\limits_y ^{y_0} \int\limits_{y_1}^{\infty} \dots 
\int\limits_{y_{n-1}}^{\infty} (-1)^nW^{(n)}(y_n)    dy_n 
\dots  dy_2 dy_1, \ \ y>0.
\end{equation}
For each fixed $y_0>0$, the above representation holds.\end{prop}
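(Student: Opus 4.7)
The plan is to extract sign and monotonicity information from the defining condition $-W'\in\mathcal{CM}(n-1)$, deduce the vanishing of all intermediate derivatives at infinity, and then unwind the integral representation by iterated applications of the fundamental theorem of calculus.

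First, I unpack the definition. The condition $-W'\in\mathcal{CM}(n-1)$ translates, after a shift of index, into $(-1)^j W^{(j)}(y)\ge 0$ on $(0,\infty)$ for $j=1,\dots,n$. Hence, for each $j\in\{1,\dots,n-1\}$, the function $\varphi_j:=(-1)^j W^{(j)}$ is nonnegative and satisfies $\varphi_j'=(-1)^j W^{(j+1)}=-\varphi_{j+1}\le 0$, so $\varphi_j$ is nonincreasing on $(0,\infty)$. Consequently, the limit $L_j:=\lim_{y\to\infty}\varphi_j(y)\in[0,\infty)$ exists and is finite, bounded above by $\varphi_j(y_0)$ for any chosen $y_0>0$.

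Second, to prove \eqref{high-order-null} I show $L_j=0$ for each $j\in\{1,\dots,n-1\}$. The case $j=1$ is the standing hypothesis $W'(\infty)=0$. For $j\ge 2$, I argue by contradiction: if $L_j>0$, then for any fixed $y_0>0$,
\[
\varphi_{j-1}(y)-\varphi_{j-1}(y_0)=\int_{y_0}^{y}\varphi_{j-1}'(t)\,dt=-\int_{y_0}^{y}\varphi_j(t)\,dt\le -L_j(y-y_0)\xrightarrow[y\to\infty]{}-\infty,
\]
contradicting $\varphi_{j-1}(y)\ge 0$. Hence $L_j=0$, yielding \eqref{high-order-null}.

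Third, with $W^{(k)}(\infty)=0$ established for $k=1,\dots,n-1$, the fundamental theorem of calculus gives, for each such $k$,
\[
W^{(k)}(y_k)=-\int_{y_k}^{\infty}W^{(k+1)}(y_{k+1})\,dy_{k+1},
\]
where convergence is automatic because $W^{(k+1)}$ has the constant sign $(-1)^{k+1}$, so the improper integral is the monotone limit of $W^{(k)}(R)-W^{(k)}(y_k)$ as $R\to\infty$. Iterating this identity starting from $W'(y_1)=W^{(1)}(y_1)$ accumulates $n-1$ minus signs and delivers
\[
-W'(y_1)=\int_{y_1}^{\infty}\int_{y_2}^{\infty}\cdots\int_{y_{n-1}}^{\infty}(-1)^n W^{(n)}(y_n)\,dy_n\cdots dy_2,
\]
which is nonnegative because $(-1)^n W^{(n)}\ge 0$ and finite because it equals $-W'(y_1)\in[0,\infty)$. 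The representation \eqref{Vn} then follows by writing $W(y)=W(y_0)+\int_y^{y_0}(-W'(y_1))\,dy_1$ (with the usual orientation convention when $y>y_0$) and substituting the iterated-integral expression for $-W'(y_1)$.

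The one genuine obstacle is the second step: neither the full Bernstein representation \eqref{254} nor any iterated integration-by-parts argument is directly available at the finite-order level $\mathcal{CM}(n-1)$, so the vanishing of the intermediate $W^{(k)}(\infty)$ must be obtained by hand. The bounded-versus-unbounded comparison above handles it cleanly once the monotonicity of each $\varphi_j$ has been recognised; everything subsequent is routine iteration of the fundamental theorem of calculus.
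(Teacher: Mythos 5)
Your proof is correct and follows essentially the same route as the paper: exploit the sign pattern to show each $(-1)^j W^{(j)}$ is nonnegative and nonincreasing, derive the vanishing of the intermediate derivatives at infinity by an unbounded-decrease contradiction, and then iterate the fundamental theorem of calculus. The only cosmetic difference is that the paper phrases the vanishing step as an induction invoking $W^{(k)}(\infty)=0$ at the previous level, whereas you obtain $L_j=0$ for each $j\ge 2$ directly from the nonnegativity of $\varphi_{j-1}$ alone, which is a slightly leaner formulation of the same argument.
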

\begin{proof} As $W\in \mathcal D(n)$, we have $(-1)^kW^{(k)}(y)\geq 0$ for $k=1,\dots, n$ and $W'(\infty)=0$. Assume now that
$W^{(k)}(\infty)=0$ for some $k\leq n-2$. Since $ (-1)^{k+2}W^{(k+2)} (y)\geq 0,$ we conclude that the  function
 $ y\rightarrow (-1)^{k+1}W^{(k+1)} (y)\geq 0$  is decreasing.  Next, assume that $W^{(k+1)}(\infty)\not= 0$,  so 
  $$(-1)^{k+1}W^{(k+1)} (\infty) >0.$$
  This, however, would contradict the monotonicity of  
  $y\rightarrow  (-1)^kW^{(k)}(y)$, which is decreasing, and the  assumption that $W^{(k)}(\infty)=0$.  An inductive argument completes the proof.  \end{proof}
\subsection{Stochastic dominance of finite order}

Let $F$ and $G$ be two cumulative distribution functions with supports on $%
\mathbb{R}_{+}=[0,\infty )$. We recall that {\it $F$ stochastically dominates $G$ 
in the first order} if 
$$F(y)\leq G(y),\quad y\geq 0.$$ 
%

To define stochastic dominance of higher orders, following, for example, \cite{Thistle},  we set 
\begin{equation}
F_{1}=F\text{ \  \  \ and \  \ }F_{i}(y)=\int\limits_{0}^{y}F_{i-1}(z)dz,\quad
i=2,3\dots .  \label{F-n}
\end{equation}%
Since $0\leq F\leq 1$, the integrals are well defined. The functions $G_{i}$
are defined similarly. 
Next, we depart slightly from the definition customary in the literature, e.g.,  in \cite{Thistle}, see also \cite{01-zhang} and \cite{whitmore3rd}.  On the one hand, we use a somewhat weaker definition, while, on the other, we can treat unbounded supports. More comments follow the definition. 
\begin{defn}\label{def:dominance-fin}
For any $n\geq 1$, we say that $F$ stochastically dominates $G$ in the sense of the  $n$-th order, and denote $F {\succeq}_n G$, if  $F_{n}(y)\leq G_{n}(y)$,  $y\geq 0$.  For two random variables $\xi, \eta \geq 0$ we say that  $ \xi {\succeq}_{n} \eta$ if  $F_{\xi}  {\succeq}_{n} F_{\eta}$. \end{defn}
\begin{rem} For $n\geq 3$, it is customary in the literature,  in order to define 
$F {\succeq}_n G$, to both 
\begin{enumerate}\item  assume that $F$ and $G$ are supported on a finite interval $[0,b]$, 
\item  have the additional condition 
$F_k(b)\leq G_k(b)$, $k = 1,\dots,n-1$. 
\end{enumerate}
Our definition by-passes both points above  since   we will only use  a restrictive set of  ``test'' functions, namely $ \mathcal D(n)$. For such test functions, condition \eqref{high-order-null}  ensures that we do not (even formally) need the extra assumption.  Furthermore, our definition works well for $n\geq 3$ for measures fully  supported on the  $[0,\infty)$ that we need.
\end{rem}

\begin{prop}\label{char:fin-order} Consider two non-negative random variables $\xi$ and $\eta$. Fix $n\geq 2$.  Then, the following conditions are equivalent:
\begin{enumerate}
\item $ \xi {\succeq}_{n} \eta$,
\item $\mathbb{E}[W(\xi)]\leq \mathbb E [W(\eta )]$
for every   function  $W \in \mathcal D(n)$, such that  $W(\infty)>-\infty,$ (i.e., $W$ is bounded from below),
\item $\mathbb{E}[W(\xi)]\leq \mathbb E [W(\eta )]$
for every   function  $W \in \mathcal D(n)$
such that
$\mathbb E [W^-(\xi )]<\infty$ and $ \mathbb E [W^-(\eta )]<\infty.$ \end{enumerate}
\end{prop}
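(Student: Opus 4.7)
My plan is to exploit the iterated-integral representation \eqref{Vn} from Proposition \ref{3.3} to rewrite $\mathbb{E}[W(\xi)]-\mathbb{E}[W(\eta)]$ as a single integral of the nonnegative function $(-1)^n W^{(n)}$ against $F_{\xi,n}-F_{\eta,n}$, and then construct a family of bump test functions to reverse the inequality pointwise. The chain $(1)\Rightarrow(3)\Rightarrow(2)\Rightarrow(1)$ is most natural, with $(3)\Rightarrow(2)$ immediate since $W\geq M>-\infty$ forces $W^-\leq\max(-M,0)$, leaving the real work in $(1)\Rightarrow(3)$ and $(2)\Rightarrow(1)$.

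For $(1)\Rightarrow(3)$, I would first apply Fubini to the layer-cake identity $W(\xi)=W(0)+\int_0^{\xi}W'(y)\,dy$ to obtain $\mathbb{E}[W(\xi)]=W(0)+\int_0^{\infty}W'(y)(1-F_\xi(y))\,dy$. Next I would collapse the $(n-1)$-fold iterated integral in Proposition \ref{3.3} via the Cauchy repeated-integration formula to
\[-W'(y)=\int_y^{\infty}\frac{(z-y)^{n-2}}{(n-2)!}(-1)^nW^{(n)}(z)\,dz.\]
Since both factors are nonnegative, Tonelli applies, and combined with the elementary identity $F_n(z)=\int_0^z\frac{(z-y)^{n-2}}{(n-2)!}F(y)\,dy$ (a quick induction on $n$), yields
\[W(0)-\mathbb{E}[W(\xi)]=\int_0^{\infty}(-1)^nW^{(n)}(z)\left[\frac{z^{n-1}}{(n-1)!}-F_{\xi,n}(z)\right]dz,\]
valid as an identity in $[0,\infty]$, and similarly for $\eta$. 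Since $W\leq W(0)$ implies $\mathbb{E}[W^+(\xi)]\leq W(0)^+<\infty$ automatically, the hypothesis of $(3)$ makes both expectations finite, so the two identities may be subtracted; the polynomial cancels and we are left with
\[\mathbb{E}[W(\xi)]-\mathbb{E}[W(\eta)]=\int_0^{\infty}(-1)^nW^{(n)}(z)\left[F_{\xi,n}(z)-F_{\eta,n}(z)\right]dz\leq 0,\]
because $(-1)^nW^{(n)}\geq 0$ and $F_{\xi,n}\leq F_{\eta,n}$ by $(1)$.

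For $(2)\Rightarrow(1)$, given $y_0>0$ and $\epsilon>0$, I would construct a compactly supported bump $W^{\epsilon,y_0}\in\mathcal{D}(n)$ by prescribing $(-1)^n(W^{\epsilon,y_0})^{(n)}(z)=\tfrac{1}{\epsilon}1_{[y_0,y_0+\epsilon]}(z)$ and integrating $n$ times, choosing constants so that $(W^{\epsilon,y_0})'(\infty)=0$. Membership in $\mathcal{D}(n)$ follows from
\[(-1)^k(W^{\epsilon,y_0})^{(k)}(y)=\int_y^{\infty}\frac{(z-y)^{n-k-1}}{(n-k-1)!}(-1)^n(W^{\epsilon,y_0})^{(n)}(z)\,dz\geq 0,\quad 1\leq k\leq n-1,\]
and compactness of the support forces $W^{\epsilon,y_0}$ to be eventually constant, hence bounded, so that $(2)$ applies. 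Inserting into the formula derived in the first direction, I obtain $\mathbb{E}[W^{\epsilon,y_0}(\xi)]-\mathbb{E}[W^{\epsilon,y_0}(\eta)]=\tfrac{1}{\epsilon}\int_{y_0}^{y_0+\epsilon}[F_{\xi,n}(z)-F_{\eta,n}(z)]\,dz\leq 0$; letting $\epsilon\downarrow 0$ and using continuity of $F_{\cdot,n}$ for $n\geq 2$, I conclude $F_{\xi,n}(y_0)\leq F_{\eta,n}(y_0)$.

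The main obstacle I anticipate is the careful bookkeeping in $(1)\Rightarrow(3)$: since $W(\infty)$ may be $-\infty$, the Tonelli identity above must be applied separately to each expectation (producing identities valued in $[-\infty,W(0)]$) and only then subtracted, with the integrability hypothesis of $(3)$ invoked precisely to guarantee joint finiteness so the subtraction is unambiguous. The remaining sign checks — for instance, that the boundary contributions at infinity vanish courtesy of \eqref{high-order-null} — are routine once representation \eqref{Vn} is in hand.
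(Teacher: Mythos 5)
Your argument is correct and reaches the conclusion, but the architecture differs from the paper's in a way worth noting. The paper establishes $(1)\Leftrightarrow(2)$ by applying Fubini to the $n$-fold iterated integral \eqref{Vnbd} (written with indicator functions rather than the closed Cauchy kernel) to obtain \eqref{Fubini-n}, and then proves $(2)\Rightarrow(3)$ by a cut-off argument: smoothly truncating $W^{(n)}$ away from $0$ and $\infty$ to produce bounded $W_i\in\mathcal D(n)$, passing to the limit separately in $\mathbb E[W_i^{\pm}]$ via monotone convergence. You instead run the chain $(1)\Rightarrow(3)\Rightarrow(2)\Rightarrow(1)$, collapse the nested integrals via the Cauchy repeated-integration kernel $\frac{(z-y)^{n-2}}{(n-2)!}$, and sidestep the cut-off entirely by exploiting that $W\leq W(0)<\infty$ makes $\mathbb{E}[W^+(\xi)]$ automatically finite, so the Tonelli identity $W(0)-\mathbb E[W(\xi)]=\int_0^\infty(-1)^nW^{(n)}(z)\bigl[\tfrac{z^{n-1}}{(n-1)!}-F_{\xi,n}(z)\bigr]dz$ holds as an equality of nonnegative extended reals and the subtraction is legitimate precisely under the hypothesis of $(3)$. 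This is a somewhat cleaner route to $(1)\Rightarrow(3)$: you recover exactly what the cut-off was designed to handle, but by a direct accounting of signs rather than an approximation. Your $(2)\Rightarrow(1)$ step with bump test functions is also the right idea and in fact makes explicit what the paper leaves implicit in declaring \eqref{Fubini-n} to ``show that $(1)$ and $(2)$ are equivalent.''

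One small repair is needed in the bump construction. You set $(-1)^n(W^{\epsilon,y_0})^{(n)}=\frac1\epsilon 1_{[y_0,y_0+\epsilon]}$, but the definition of $\mathcal D(n)$ via $-W'\in\mathcal{CM}(n-1)$ requires $W^{(n)}$ to exist at every point of $(0,\infty)$, and your choice fails to be differentiable at $y_0$ and $y_0+\epsilon$. Replace the normalized indicator by any continuous nonnegative function $\varphi_\epsilon$ of unit integral supported in $[y_0,y_0+\epsilon]$ (a tent function, say); all your sign and boundedness computations survive verbatim, and the limit $\epsilon\downarrow 0$ combined with continuity of $F_{\cdot,n}$ for $n\geq 2$ still delivers $F_{\xi,n}(y_0)\leq F_{\eta,n}(y_0)$. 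With that adjustment the proof is complete.
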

\begin{proof}
 If $W\in \mathcal D (n)$ is bounded below, we  will suppose that $W(\infty)=0$, without loss of generality. For $y_0=\infty$,  representation \eqref{Vn} becomes 
\begin{equation}
\begin{split}
\label{Vnbd}W(y)&=  \int_y ^{\infty} \int\limits_{y_1}^{\infty} \dots ~
\int\limits_{y_{n-1}}^{\infty} (-1)^nW^{(n)}(y_n)    dy_n 
~\dots  dy_2 dy_1\\
&=\int\limits_{\mathbb R^n_{+}} 1_{\{ y\leq y_1\leq \dots\leq y_n\}}(-1)^nW^{(n)}(y_n)    dy_n 
~\dots  dy_2 dy_1,\quad y>0.
\end{split}
\end{equation}
Therefore, Fubini's theorem yields
\begin{equation}\label{11271}
\begin{split}
\mathbb E[W(\xi)]& = \mathbb E\left[ \int\limits_{\mathbb R^n_{+}} 1_{\{ \xi\leq y_1\leq \dots\leq y_n\}}(-1)^nW^{(n)}(y_n)    dy_n 
~\dots  dy_2 dy_1 \right] \\
&= \int\limits_{0}^\infty  \left (\int\limits_{\mathbb R^{n-1}_{+}} \mathbb E\left[ 1_{\{ \xi\leq y_1\leq \dots\leq y_n\}}\right]dy_1 
~\dots  dy_{n-1} \right ) (-1)^nW^{(n)}(y_n)dy_n.
\end{split}
\end{equation}
Fix  $y_n$.  Using  the cdf  $F$ of  $\xi$ we can rewrite
\begin{displaymath}
\begin{split}\int\limits_{\mathbb R^{n-1}_{+}} \mathbb E\left[ 1_{\{ \xi\leq y_1\leq \dots\leq y_n\}}\right]dy_1 
~\dots  dy_{n-1} & =  \int\limits_{0}^{y_n}\dots\int\limits_{0}^{y_2}\mathbb P\left[  \xi\leq y_1\right]dy_1 
~\dots  dy_{n-1} \\
&= \int\limits_{0}^{y_n}\dots\int\limits_{0}^{y_2}F(y_1)dy_1 
~\dots  dy_{n-1} = F_n(y_n),
\end{split}
\end{displaymath}
where we have used that $\xi\geq 0$.
Together with \eqref{11271}, we obtain
\begin{equation}
\label{Fubini-n}\mathbb E[W(\xi)]=\int\limits_0^{\infty} W(y)dF(y)= \int\limits_0^{\infty} (-1)^n W^{(n)} (y_n) F_n(y_n) dy_n.
\end{equation}
This shows that (1) and   (2) above are equivalent. 

To show $(2)\Rightarrow (3)$, for a general $W\in \mathcal D(n)$ in (3),  first assume that $W(1)=0$, without any loss of generality.  
Next, one has to use a smooth cut-off of the $n$-th derivative of $W$ away from infinity.  More precisely, consider an increasing sequence of functions
$$0\leq f_i(y)\nearrow 1, 0<y<\infty$$ and such that $$supp (f_i)\subset (\frac 1i, i).$$  Then we set  
$$W^{(n)}_i (y) : = f_i(y)W^{(n)}(y), y>0,$$
and $$W_i(1)=0, W'(\infty)=...=W^{(n-1)}(\infty)=0.$$
We therefore  recover  aol the lower order  derivatives, up to the first order, using the computation in Proposition \ref{3.3}:
$$0\leq -W'_i(y_1)= \int\limits_{y_1}^{\infty} \dots 
\int\limits_{y_{n-1}}^{\infty} (-1)^n f_i(y_n)W^{(n)}(y_n)    dy_n 
\dots  dy_2 <\infty,\quad y_1>0,$$
so 
$$0\leq  -W'_i(y_1)\nearrow -W'(y_1), 0<y_1<\infty.$$
We integrate the above relation to conclude that
$$W_i^+ (y)=1_{\{y\leq 1\}}W_i (y)=1_{\{y\leq 1\}}\int _y^1 -W'_i(y_1)dy_1\nearrow  1_{\{y\leq 1\}}\int _y^1 -W'(y_1)dy_1=W^+(y),$$
and
$$W_i^- (y)=1_{\{y>1\}}(-W_i (y))=1_{\{y>1\}}\int _1^y -W'_i(y_1)dy_1\nearrow  1_{\{y> 1\}}\int _1^y -W'(y_1)dy_1=W_+(y).$$
Using (2) for the bounded test functions $W_i\in \mathcal D(n)$, we can pass to the limit separately for the positive and negative parts to conclude that $(2)\Rightarrow (3)$.
\end{proof}

\subsection{Stochastic dominance of infinite degree}
The infinite-order stochastic dominance is, intuitively, defined by letting $n\uparrow \infty$ in Definition \ref{def:dominance-fin}. This, however, has to be done carefully. We again depart from \cite{Thistle} for our definition. 

To provide some intuition, we first note that, for each $z>0$, 
the exponential function  $W(y )=e^{-zy}$, $y>0$, is in $\mathcal D (n)$, for every $z>0$ and $n\geq 1$. For every $z>0$, relation  \eqref{Fubini-n} reads
$$\mathbb E [e^{-z\xi}]=\int\limits_0^{\infty} e^{-zy}dF(y) =\int\limits_0^{\infty} z^{n} e^{-zy}F_n(y)dy,\quad n\geq 1. $$ 
Therefore, if for any $n$, no matter  how large, we have $ F{\succeq}_n G$, then the exponential moments of the two distributions compare, for all  positive values of $z$.  It thus appears to us that the weakest possible form of dominance, obtaining by letting $n\uparrow \infty$, is the one below.
\begin{defn}\label{def:dominance-inf} Consider two  cumulative distributions    $F$ and $G$ on $[0, \infty)$. We say that $F$ dominates $G$ in {\it infinite degree stochastic dominance}, and denote by $F  {\succeq}_{\infty}G$, if 
$$\int\limits_0^{\infty} e^{-zy}dF(y)\leq  \int\limits_0^{\infty} e^{-zy}dG(y), \quad  z>0.$$
For nonnegative random variables $\xi$ and $\eta$, we say that $\xi $ dominates $%
\eta $ in \textit{infinite-order stochastic dominance}, and denote $ \xi {\succeq}_{\infty} \eta$  if  $F_{\xi}  {\succeq}_{\infty} F_{\eta}$, that is
$$\mathbb E\left [e^{-z \xi}\right]\leq \mathbb E\left [e^{-z\eta}\right],\quad  z>0.$$\end{defn}
\begin{rem}
To the best of our knowledge, 
the name  of {\it infinite-order stochastic dominance} first    appeared  in \cite{Thistle} (but for a somewhat less precise definition), whereas  \cite{BrockettGolden}  does not  use the  specific name of infinite-order dominance. 
\end{rem}
 Below, we provide a characterization of infinite-order stochastic
dominance. 
\begin{prop}\label{char:inf-order} Consider two non-negative random variables $\xi$ and $\eta$. Then, the following conditions are equivalent:
\begin{enumerate}
\item $ \xi {\succeq}_{\infty} \eta$,
\item $\mathbb{E}[W(\xi)]\leq \mathbb E [W(\eta )]$
for every function  $W \in \mathcal D $, such that $W(\infty) > -\infty$, i.e., $W$ is bounded from below,
\item $\mathbb{E}[W(\xi)]\leq \mathbb E [W(\eta )]$
for every function  $W \in \mathcal D $  such that
\begin{equation}\label{252}
\mathbb E [W^-(\xi )]<\infty\quad  and \quad \mathbb E [W^-(\eta )]<\infty.
\end{equation}
\end{enumerate}
\end{prop}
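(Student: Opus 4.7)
The plan is to prove $(1)\Leftrightarrow(2)$ directly and then obtain $(2)\Rightarrow(3)$ by a truncation argument, noting that $(3)\Rightarrow(2)$ is automatic because any $W\in\mathcal D$ bounded below has $W^-$ bounded, so $\mathbb E[W^-(\xi)]$ and $\mathbb E[W^-(\eta)]$ are trivially finite for any $\xi,\eta$. The central tool is Bernstein's representation: for each $W\in\mathcal D$ one has $-W'(y)=\int_0^\infty e^{-yz}\,d\mu(z)$ with $\mu(\{0\})=0$, which by Fubini yields
\begin{equation}\label{eq:plan-rep}
W(y)-W(y_0)=\int_0^\infty \frac{e^{-yz}-e^{-y_0 z}}{z}\,d\mu(z),\qquad y,y_0>0.
\end{equation}

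For $(2)\Rightarrow(1)$ it suffices to test $(2)$ against $W_z(y):=e^{-zy}$, which belongs to $\mathcal D$ (Bernstein measure $z\,\delta_z$) and is bounded in $[0,1]$, delivering exactly $(1)$. For $(1)\Rightarrow(2)$ take $W\in\mathcal D$ with $W(\infty)>-\infty$; letting $y_0\uparrow\infty$ in \eqref{eq:plan-rep} (monotone convergence, the integrand being non-negative and increasing in $y_0$) yields $W(y)-W(\infty)=\int_0^\infty z^{-1}e^{-yz}\,d\mu(z)$. Tonelli applied to $W(\xi)-W(\infty)\geq 0$ (and likewise to $\eta$) gives $\mathbb E[W(\xi)]-W(\infty)=\int_0^\infty z^{-1}\,\mathbb E[e^{-z\xi}]\,d\mu(z)$, and the pointwise inequality $\mathbb E[e^{-z\xi}]\leq\mathbb E[e^{-z\eta}]$ from $(1)$ inside the $d\mu$-integral yields $(2)$.

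The main work is the approximation for $(2)\Rightarrow(3)$, modelled on the cut-off used in the proof of Proposition~\ref{char:fin-order}, but performed at the level of the Bernstein measure. After reducing to $W(1)=0$ by a constant shift (which leaves the inequality in $(3)$ invariant), set $\mu_i:=1_{[1/i,i]}\mu$ and define $W_i\in\mathcal D$ by $-W_i'(y):=\int_0^\infty e^{-yz}\,d\mu_i(z)$ with normalization $W_i(1)=0$. Each $\mu_i$ is a finite measure (since $\mu([1/i,i])\leq e^{i}(-W'(1))<\infty$), so $W_i(\infty)=-\int_{1/i}^i z^{-1}e^{-z}\,d\mu(z)\geq -i\,(-W'(1))>-\infty$ and $W_i$ is bounded below; hence $(2)$ gives $\mathbb E[W_i(\xi)]\leq\mathbb E[W_i(\eta)]$. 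Monotone convergence on $\mu_i\nearrow\mu$ gives $-W_i'\nearrow -W'$ pointwise, and since $W_i(1)=W(1)=0$ the identities $W_i^+(y)=1_{[0,1]}(y)\int_y^1(-W_i'(s))\,ds$ and $W_i^-(y)=1_{(1,\infty)}(y)\int_1^y(-W_i'(s))\,ds$ (and their analogues for $W$) produce the pointwise monotone convergences $W_i^+\nearrow W^+$ and $W_i^-\nearrow W^-$. Applying MCT separately to the positive and negative parts, while using $\mathbb E[W^-(\xi)]<\infty$ to keep the negative piece finite in the limit, yields $\mathbb E[W_i(\xi)]\to\mathbb E[W(\xi)]$ in $(-\infty,\infty]$, and likewise for $\eta$; the $W_i$-inequality then passes to the limit. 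The principal technical obstacle is precisely this approximation: the truncation has to preserve the sign structure of $W$ about $y=1$ so that $W^+$ and $W^-$ are both approximated monotonically, which is what keeps the MCT argument valid even in the case where $\mathbb E[W^+(\xi)]$ or $\mathbb E[W^+(\eta)]$ is infinite.
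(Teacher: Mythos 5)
Your proposal is correct, and it takes a genuinely different route from the paper. The paper establishes the cycle $(1)\Rightarrow(3)\Rightarrow(2)\Rightarrow(1)$: it proves $(2)\Rightarrow(1)$ by contradiction, via a clever construction that starts from a bounded $V$-like function built from a sigma-finite Bernstein measure and then perturbs its Bernstein measure by adding a large Dirac mass $c\,\delta_z$, and it proves $(1)\Rightarrow(3)$ directly by writing $W(y)-W(y_0)=\int_0^\infty (e^{-yt}-e^{-y_0t})t^{-1}\mu(dt)$, splitting the expectation over the events $\{\xi\geq y_0\}$ and $\{\xi<y_0\}$ where the integrand has a definite sign, and invoking monotone convergence along $\int_{1/n}^n$. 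You instead prove $(1)\Leftrightarrow(2)$ directly and separately establish $(2)\Rightarrow(3)$. Your $(2)\Rightarrow(1)$ is considerably shorter than the paper's: the exponentials $y\mapsto e^{-zy}$ already lie in $\mathcal D$ and are bounded, so one may test $(2)$ against them directly, with no contradiction argument and no auxiliary measures. Your $(1)\Rightarrow(2)$ sends $y_0\uparrow\infty$ in the Bernstein identity, exploiting that $W$ bounded below forces $W(\infty)$ to exist and makes the resulting integral nonnegative, so Tonelli applies cleanly. Your $(2)\Rightarrow(3)$ moves the cut-off used in the finite-order proof (Proposition \ref{char:fin-order}) from the $n$-th derivative to the Bernstein measure itself, truncating $\mu$ to $[1/i,i]$; the check that each $W_i$ is bounded below (via $W_i(\infty)=-\int_{1/i}^i z^{-1}e^{-z}\mu(dz)\geq -i\,(-W'(1))$) is correct, and the monotone recovery of $W_i^\pm\nearrow W^\pm$ about the pivot $y=1$ lets MCT pass the inequality to the limit even when the positive parts are not integrable. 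On the whole, your route distributes the labor over more but simpler steps and makes the infinite-order proof structurally parallel to the finite-order one, whereas the paper's $(1)\Rightarrow(3)$ avoids the truncation of the measure at the cost of a more delicate splitting argument inside the dual integral.
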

%

\begin{proof} 
$(2) \Rightarrow (1)$. Let us suppose that $(2)$ holds and assume by contradiction that there exists $z>0$, such that 
\begin{equation}\label{2561}
\mathbb E\left[ e^{-z \xi}\right] > \mathbb E\left[e^{-z \eta} \right].
\end{equation}
Let us consider a measure $\mu$  such that 
\begin{equation*}\mu(\{0\}) = 0,\quad  \quad \mu((0,\infty)) = \infty,\quad and \quad \int\limits_0^\infty\int\limits_0^\infty e^{-yt}\mu(dt) {dy=\int\limits_0^\infty \frac 1t\mu(dt)}<\infty.
\end{equation*}
For example \begin{displaymath}
\mu(dt) = \left\{\begin{array}{ll}t^{-\tfrac 12}dt,& t\geq 1\\
0  {\ dt},&t\in(0,1)
\end{array}
\right.,
\end{displaymath}
works. Next, 
let us  define $W$  as 
\begin{equation*}
W(y) := \int\limits_y^\infty\int\limits_0^\infty e^{-zt}\mu(dt)dz,\quad y>0.
\end{equation*}
Then $W\in\mathcal D$ and is bounded from below by $0$ and from above by $\int\limits_0^\infty\int\limits_0^\infty e^{-yt}\mu(dt) {dy}<\infty$. 
Now, for a constant $c>0$, we set   $$\nu_c := \mu + c\delta_z,$$
 where $\delta_z$ is a delta function centered at $z$. 
We further define  
 \begin{equation*}
W_c(y) := \int\limits_y^\infty\int\limits_0^\infty e^{-zt}\nu_c(dt)dz,\quad y>0.
\end{equation*}
One can see that $W_c\in\mathcal D$ and is bounded from below by $0$. 
By Tonelli's theorem, we have 
\begin{displaymath}
W_c(y) = \int_0^\infty e^{-yt}\frac{ {\nu_c}(dt)}{t}, \quad y >0,
\end{displaymath}
and thus
\begin{displaymath}\begin{split}
\mathbb E\left[ W_c(\xi)\right] &= \mathbb E\left[ \int_0^\infty e^{-\xi t}\frac{\nu_c(dt)}{t} \right] = \int_0^\infty \mathbb E\left[ e^{-\xi t} \right] \frac{\nu_c(dt)}{t}
\\
&= \int_0^\infty \mathbb E\left[ e^{-\xi t} \right] \frac{\mu(dt)}{t} + \frac{c}{z} \mathbb E\left[e^{-\xi z}\right] =\mathbb E\left[ W(\xi)\right] +  \frac{c}{z} \mathbb E\left[e^{-\xi z}\right].
\end{split}
\end{displaymath}
Therefore, from \eqref{2561}, and for a sufficiently large $c$, we obtain that 
\begin{displaymath}
\mathbb E\left[ W_c(\xi)\right] = \mathbb E\left[W(\xi)\right] +  \frac{c}{z} \mathbb E\left[e^{-\xi z}\right] > 
\mathbb E\left[ W(\eta)\right] +  \frac{c}{z} \mathbb E\left[e^{-\eta z}\right] = \mathbb E\left[ W_c(\eta)\right],
\end{displaymath}
which contradicts  $(2)$. 

$(3) \Rightarrow (2)$ is trivial. Therefore, it remains to show
$(1) \Rightarrow (3)$. Let us consider $\xi$ and $\eta$ satisfying $(1)$ and $W\in\mathcal D$  such that \eqref{252} holds.  Then $-W'$ admits the representation \eqref{254} for some nonnegative sigma-finite measure $\mu$ on $[0,\infty)$, satisfying \eqref{151}, and such that the integral in \eqref{254} converges for every $x>0$. 
Let us also fix $y_0>0$. Using the Bernstein representation and Tonelli's theorem, we get
\begin{equation}
\label{253}
\begin{split}
&W(y) - W(y_0) = \int\limits_y^{y_0}\left(-W'(x)\right)dx = \int\limits_y^{y_0}\int\limits_{0}^\infty e^{-xt}\mu(dt)dx \\
&=\int\limits_{0}^\infty \int\limits_y^{y_0}e^{-xt}dx\mu(dt) = \int\limits_{0}^\infty \left( e^{-yt} - e^{-y_0t}\right)\frac{\mu(dt)}{t}.
\end{split} 
\end{equation}
Therefore, using \eqref{252} and Tonelli's theorem, we obtain 
\begin{displaymath}\begin{split}
-\infty& <\mathbb E\left[\left(W(\xi) - W(y_0)\right)1_{\{\xi\geq y_0\}}\right] = 
\mathbb E\left[\left(\int\limits_{0}^\infty \left( e^{-\xi t} - e^{-y_0t}\right)\frac{\mu(dt)}{t}\right)1_{\{\xi\geq y_0\}}\right]
\\
= &\mathbb E\left[\int\limits_{0}^\infty \left( e^{-\xi t} - e^{-y_0t}\right)1_{\{\xi\geq y_0\}}\frac{\mu(dt)}{t}\right]  =
\int\limits_{0}^\infty\mathbb E\left[ \left( e^{-\xi t} - e^{-y_0t}\right)1_{\{\xi\geq y_0\}}\right]\frac{\mu(dt)}{t}
\leq 0.
\end{split}
\end{displaymath}
Consequently, using \eqref{151} and the monotone convergence theorem, we conclude that 
\begin{equation}\label{255}\begin{split}
- \infty&<\mathbb E\left[\left(W(\xi) - W(y_0)\right)1_{\{\xi\geq y_0\}}\right]=\int\limits_{0}^\infty\mathbb E\left[ \left( e^{-\xi t} - e^{-y_0t}\right)1_{\{\xi\geq y_0\}}\right]\frac{\mu(dt)}{t}\\&= \lim\limits_{ n\to\infty}\int\limits_{1/n}^n \mathbb E\left[ \left(e^{-\xi t} - e^{-y_0t} \right)1_{\{\xi \geq y_0\}}\right]\frac{\mu(dt)}{t}\leq 0.
\end{split}
\end{equation} 
Similarly, we obtain 
\begin{equation}
\label{256}\begin{split}
0\leq &\mathbb E\left[\left(W(\xi) - W(y_0)\right)1_{\{\xi< y_0\}}\right]=
\mathbb E\left[ \int \limits_{0}^\infty \left( e^{-\xi t} - e^{-y_0t}\right)1_{\{ \xi < y_0\}}\frac{\mu(dt)}{t}\right] \\
&=
\mathbb E\left[\lim\limits_{n\to \infty} \int \limits_{1/n}^n \left( e^{-\xi t} - e^{-y_0t}\right)1_{\{ \xi < y_0\}}\frac{\mu(dt)}{t}\right] = \lim\limits_{n\to \infty} \mathbb E\left[\int \limits_{1/n}^n \left( e^{-\xi t} - e^{-y_0t}\right)1_{\{ \xi < y_0\}}\frac{\mu(dt)}{t}\right] \\
&= \lim\limits_{n\to \infty}  \int \limits_{1/n}^n\mathbb E\left[\left( e^{-\xi t} - e^{-y_0t}\right)1_{\{ \xi < y_0\}}\right] \frac{\mu(dt)}{t},
\end{split}
\end{equation}
where the latter limit might be finite or not. Combining \eqref{255} and \eqref{256}, we assert that 
\begin{equation*}
-\infty<\mathbb E\left[ W(\xi)  \right]
=W(y_0) + \lim\limits_{n\to \infty}  \int \limits_{1/n}^n\left( \mathbb E\left[e^{-\xi t} \right]- e^{-y_0t}\right) \frac{\mu(dt)}{t},
\end{equation*}
and a similar representation holds for $\eta$. Therefore, if $ \xi {\succeq}_{\infty} \eta$ and \eqref{252} holds, we have 
\begin{displaymath}\begin{split}
-\infty< \mathbb E\left[ W(\xi)  \right]
&=W(y_0) + \lim\limits_{n\to \infty}  \int \limits_{1/n}^n\left( \mathbb E\left[e^{-\xi t} \right]- e^{-y_0t}\right) \frac{\mu(dt)}{t} \\
&\leq 
W(y_0) + \lim\limits_{n\to \infty}  \int \limits_{1/n}^n\left( \mathbb E\left[e^{-\eta t} \right]- e^{-y_0t}\right) \frac{\mu(dt)}{t} =  \mathbb E\left[ W(\eta)  \right],
\end{split}
\end{displaymath}
which implies $(3)$. 
This completes the proof of the proposition. 
\end{proof}
\section{Main results}\label{secMain} 


\subsection{The $\mathcal{CMIM}$ and $\mathcal{CMIM}\left(
n\right) $ utilities}

As utility-based preferences are invariant under positive linear transformations of the form $U^*(x) = aU(x) + b$, $a>0$, and in view of the importance of the marginal utility in many problems, it is natural to define a utility function through its derivative. Additionally, it has been observed that the most widely used utility functions have completely monotonic marginals, see \cite{BrockettGolden}. In the present paper, we investigate a class of functions, whose {\it inverse marginals} are completely monotonic. This is particularly natural in view of the overall importance of the duality approach to the expected utility maximization.

We start with the following definition.

\begin{defn}\label{defCMIM} We define  the $\mathcal {CMIM}$ to be the class of utility functions $U\in - \mathcal C$ for which their inverse marginal $(U')^{-1}\in \mathcal {CM}$. 
\end{defn}
From Bernstein's theorem, we deduce that if $U \in\mathcal{CMIM}$, then we have the representation
\begin{equation}\label{bernstein}
(U')^{-1}(y) = \int\limits_0^\infty e^{-yz}\mu(dz),\quad y>0,
\end{equation}
where $\mu$ is a nonnegative measure, such that the integral converges for every $y>0$.

We stress that the Inada conditions (\ref{Inada}) dictate that the
underlying measure $\mu $ must satisfy $\mu \left( \left \{ 0\right \} \right)
=0$ and $\mu \left( (0,\infty \right) )=\infty .$ Indeed, $U^{\prime }\left(
0\right) =\infty $ yields $\mu \left( \left \{ 0\right \} \right) =0$ while $%
U^{\prime }\left( \infty \right) =0$ yields $\mu ((0,\infty ))=\infty .$

\begin{exa} Here we show that standard utilities are included.
\begin{enumerate}\item
$U(x)=\log x$, $x>0$. Then, $\left( U^{\prime }\right) ^{-1}(y)=\frac{1%
}{y}\in \mathcal{CM}$ and we have  $$\left( U^{\prime }\right)
^{-1}(y)=\int\limits_{0}^{\infty }e^{-yz}dz,\quad y>0.$$ 

\item $U(x)=\frac{x^{p}}{p}$, $x>0$, $p<1$, $p\neq 0$. Then, $\left( U^{\prime }\right)
^{-1}(y)=y^{-\frac{1}{1-p}}$ $\in \mathcal{CM}$ and  with $q = -\frac{p}{1-p}$ (i.e., such that $\frac{1}{p}+\frac{1}{q}=1$),  we have $$\left( U^{\prime
}\right) ^{-1}(y)=\frac{1}{\Gamma (1-q)}\int\limits_{0}^{\infty }e^{-yz}z^{-q}dz,\quad y>0,$$ 
where $\Gamma $ is the
Gamma function, see \cite[p. viii, formula (2)]%
{BookBernsteinFunctions}.
\end{enumerate}
\end{exa}
Assuming less regularity on the utility function but keeping monotonic structure up to finite order leads to the following definition.
\begin{defn}\label{defCMIMn}
For $n\in\{2,3,\dots\}$, 
we say that a utility function $U\in - \mathcal{C}$ is in the  $\mathcal{CMIM}(n)$ class  if its inverse marginal
is completely monotonic of order $n-1$, that is
 $\left( U^{\prime }\right) ^{-1}\in \mathcal{CM}(n-1).$
\end{defn} 


Recalling Definitions \ref{defD} and \ref{defCMIM}, and denoting by $V$ the convex conjugate of $U$ in the sense of  \eqref{defV}, we deduce that 
\begin{equation*}
U\in \mathcal{CMIM}\iff V\in \mathcal{D}\cap \mathcal{C}.
\end{equation*}%
Likewise, from Definitions \ref{defDn} and \ref{defCMIMn}, we get 
\begin{equation*}
U\in \mathcal{CMIM}\left( n\right) \iff V\in \mathcal{D}\left( n\right) \cap 
\mathcal{C}.
\end{equation*}

\subsection{The class $\mathcal{SD}\left( \infty \right) $ and $\mathcal{SD}%
\left( n\right) $ of market models}

The stochastic dominance had to be formally defined separately for finite and infinite $n$ (see Definitions \ref{def:dominance-fin} and \ref{def:dominance-inf}). However, for the associated market models, now, we can give a unified definition (for both finite and infinite degree) below. 
\begin{defn} 
 Fix $n\in \{2, 3,  \dots\}\cup \{\infty\}$. We say that the financial model satisfies condition $\mathcal{SD}(n)$ if there exists 
 $\widehat Y \in \mathcal Y(1)$ such that
 $\widehat Y _T{\succeq}_n Y_T$ for every $Y\in \mathcal Y(1)$.
 \end{defn}
%
%

In what follows, we will use the terminologies ``market model in $\mathcal{SD}%
(n)$ class'' and ``$\mathcal{SD}(n)$-model'' interchangeably. 
In view of Propositions \ref{char:fin-order} and \ref{char:inf-order}, we
have the following result, for \textit{both} infinite and finite orders.

 \begin{lem} \label{lem:same-dual}
Fix $n\in \left \{ 2,3,...\right \} \cup \{ \infty \}.$  Assume that the model satisfies condition  $\mathcal{SD}(n)$  and that  $U\in \mathcal{CMIM}(n)$, thus $V\in \mathcal D (n)\cap \mathcal C$. Then, the  dual value function has the representation
 \begin{equation}\label{v-dual}
 v(y)=\mathbb E[V(y\widehat Y_T)], \quad y>0.
 \end{equation}
\end{lem}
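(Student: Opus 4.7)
The plan is to apply the integral characterization of $\succeq_n$ from Propositions \ref{char:fin-order}--\ref{char:inf-order} to the test function $V$, exploiting that $\widehat Y$ dominates every competitor in $\mathcal Y(1)$ and that the order $\succeq_n$ is preserved under multiplication by a positive constant. The key point is that $V \in \mathcal D(n) \cap \mathcal C$ sits in the admissible class of test functions appearing in those propositions.

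First I will observe that $\mathcal Y(y) = y\,\mathcal Y(1)$: this is immediate from Definition \eqref{defY}, since multiplying a supermartingale by the positive constant $y$ preserves the supermartingale property. Hence every $Y \in \mathcal Y(y)$ has the form $Y = yY'$ for some $Y' \in \mathcal Y(1)$. Next I will verify that $\succeq_n$ is invariant under multiplication by any $y>0$. For $n=\infty$, the substitution $z \mapsto yz$ in Definition \ref{def:dominance-inf} directly gives $y\widehat Y_T \succeq_\infty y Y'_T$; for finite $n \geq 2$ the identity $F_{y\xi}(x) = F_\xi(x/y)$ together with a short induction yields $(F_{y\xi})_k(x) = y^{k-1} F_k(x/y)$ for $k \leq n$, so the pointwise inequality $F_n \leq G_n$ survives the rescaling. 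Combined with the $\mathcal{SD}(n)$ hypothesis on $\widehat Y$, this produces $y\widehat Y_T \succeq_n Y_T$ for every $Y \in \mathcal Y(y)$.

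Since $V \in \mathcal D(n)$ (respectively $\mathcal D$, when $n = \infty$) by hypothesis, I will then apply Proposition \ref{char:fin-order} or \ref{char:inf-order} with test function $W = V$ to conclude
\[
\mathbb E[V(y\widehat Y_T)] \leq \mathbb E[V(Y_T)] \qquad \text{for every } Y \in \mathcal Y(y).
\]
Coupled with the fact that $y\widehat Y \in \mathcal Y(y)$, this identifies $y\widehat Y$ as the extremizer of the dual problem and delivers \eqref{v-dual}.

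The hard part will be verifying the integrability condition (3) of Propositions \ref{char:fin-order}--\ref{char:inf-order}, namely $\mathbb E[V^-(y\widehat Y_T)] < \infty$ and $\mathbb E[V^-(Y_T)] < \infty$, since $V$ need not be bounded below (consider $U = \log$). I expect to handle this via the standing assumption $v(y) < \infty$ together with the standard utility-maximization duality of \cite{KostasEmery, Mostovyi2015}; failing that, one can apply the characterization first to the bounded truncations $V_i$ built from a cut-off of the $n$-th derivative in the spirit of the proof of Proposition \ref{char:fin-order}, and then pass to the limit by monotone convergence on the positive and negative parts separately.
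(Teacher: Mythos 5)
Your proposal is correct and fills in exactly the argument the paper leaves implicit by stating the lemma ``in view of Propositions \ref{char:fin-order} and \ref{char:inf-order}'': scale invariance of $\succeq_n$ together with $\mathcal Y(y)=y\,\mathcal Y(1)$, then apply the characterization to the test function $V\in\mathcal D(n)\cap\mathcal C$ and conclude by optimality of $y\widehat Y\in\mathcal Y(y)$ for the dual infimum. The integrability you flag as the ``hard part'' is in fact immediate, without invoking \eqref{finDual} or any truncation: by conjugacy $-V(Y_T)\leq Y_T+|U(1)|$, so the supermartingale property of any $Y\in\mathcal Y(y)$ gives $\mathbb E[V^-(Y_T)]\leq y+|U(1)|<\infty$ unconditionally (this is the argument recorded in Remark \ref{remInt}), which already puts $V$ in the admissible class (3) of both Propositions.
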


This result yields the key property that, up to a multiplicative constant, the dual problem admits the 
\textit{same optimizer,} $\widehat{Y}$, for \textit{any} initial $y>0$. 
Representation (\ref{v-dual}) can be thought of as a \textit{relaxation} of
the notion of model \textit{completeness}, in the following sense: while a
market model in $\mathcal{SD}(n)$ is, in general, incomplete from the point
of view of replication, it does behave like a complete one from the point of
view of optimal investment, if the utility function $U\in $ $\mathcal{CMIM}%
(n)$.

\subsection{Main theorems}

We will assume that 
\begin{equation}\label{finDual}
v(y)<\infty, \quad y>0.
\end{equation}
We recall that the above condition is the canonical integrability one on the
dual value function that is necessary and sufficient for the standard assertions of the
utility maximization theory to hold; see \cite{KS2003} (see, also, \cite%
{KostasEmery} for the formulation without {\it NFLVR} and \cite{Mostovyi2015} for
the formulation with intermediate consumption and stochastic utility, where 
\eqref{finDual} is combined with the finiteness of the primal value
function). We also recall that for $U\in-\mathcal C$, under \eqref{NUPBR} and \eqref{finDual}, for every $x,y>0$,
there exist unique optimizers, $\widehat{X}(x)\in \mathcal{X}(x)$ and $%
\widehat{Y}(y)\in \mathcal{Y}(y),$ for the primal \eqref{primalProblem} and dual
\eqref{dualProblem} problems, respectively. This is a consequence of the abstract theorems in \cite{Mostovyi2015}.

\begin{thm}\label{thm1} Consider a financial model for which  \eqref{NUPBR} holds, and which is in $\mathcal{SD}(\infty)$. 
For $U\in \mathcal{CMIM}$, consider the optimal investment problem  \eqref{primalProblem} and assume that \eqref{finDual} holds. Then, the following assertions hold:

\begin{enumerate}\item
The value function $u\in \mathcal{CMIM}$ and is thus analytic. 
\item The dual value function $v\in \mathcal{D}\cap \mathcal C$. Furthermore, for $n \in\{1,2,\dots\}$, we have 
\begin{equation}\label{derv}
\begin{split}
(-1)^nv^{(n)}(y)  =&~(-1)^n\mathbb E\left[ V^{(n)}(\widehat Y_T(y))\left(\frac{\widehat Y_T(y)}{y}\right)^n\right] \\
=&~(-1)^n\mathbb E\left[ V^{(n)}(y\widehat Y_T)\left(\widehat Y_T\right)^n\right]\\
=&~\mathbb E\left[ \int\limits_0^\infty e^{-zy\widehat  Y_T }z^{n-1}\widehat Y_T^n\mu(dz)\right]\in(0,\infty),\quad y>0.
\end{split}
\end{equation} 

\item 
The $\mathcal{CM}$ function $-v'$  admits the Bernstein representation
\begin{equation}\label{bernRepv}
v'(y) = -\int\limits_0^\infty e^{-yz}\nu(dz),\quad y>0,
\end{equation}
for some sigma-finite measure $\nu$ supported in $(
0,\infty ) $, such that $$\nu\left( \left \{ 0\right \}\right) =0\quad {and}\quad \nu\left( \left( 0,\infty \right) \right) =\infty ,$$ and which satisfies
\begin{equation}\label{propertyNu}
\lim \limits_{n\uparrow \infty }\int\limits_{(0,z]}(-1)^{n+1}v^{(n+1)}\left(\frac{n}{\rho 
}\right)\left(\frac{n}{\rho }\right)^{n+1}d\rho =\frac{\nu\left( \left( 0,z\right]
\right) +\nu\left( \left( 0,z\right) \right) }{2},\quad z>0.
\end{equation}
\item 
For $n\geq 2$ and $f(y) := -\frac{1}{v''(y)}$, $y>0$, 
we  have
\begin{equation}\label{deru}
\begin{split}
u^{(n)}(x) =\sum \frac{(n-2)!}{k_1!1!^{k_1}\dots k_{n-2}!{(n-2)!}^{k_{n-2}}}f^{(k_1 +  \dots +k_{n-2})}(u'(x))\prod\limits_{j=1}^{n-2}\left(
{u^{(j+1)}(x)}
\right)^{k_j},\\
x>0,
\end{split}
\end{equation}
where  the sum is over all $n$-tuples of nonnegative integers $(k_1, \dots, k_{n-2})$ satisfying 
  $\sum\limits_{i=1}^{n-2}ik_{i}=n-2$. 
  \end{enumerate}
\end{thm}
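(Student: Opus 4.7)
The plan rests on Lemma~\ref{lem:same-dual}, which yields the key closed form \eqref{v-dual}: $v(y) = \mathbb{E}[V(y\widehat Y_T)]$ with a \emph{single} dual maximizer $\widehat Y_T$ independent of $y$. This, together with the Bernstein representation $-V'(y) = (U')^{-1}(y) = \int_0^\infty e^{-yz}\mu(dz)$ from \eqref{bernstein}, reduces the regularity question for $v$ to the concrete problem of differentiating a scalar parameter-dependent expectation.

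To prove \eqref{derv}, I would proceed by induction on $n$. Differentiating the Bernstein representation of $-V'$ termwise yields $(-1)^n V^{(n)}(y) = \int_0^\infty z^{n-1} e^{-yz}\mu(dz) \geq 0$, and hence $V \in \mathcal{D} \cap \mathcal{C}$. To pass derivatives through the expectation in $v(y) = \mathbb{E}[V(y\widehat Y_T)]$, I would sandwich the difference quotients using convexity (for $h>0$, $V^{(k-1)}(y\widehat Y_T)\widehat Y_T^{k-1} \leq [V^{(k-2)}((y+h)\widehat Y_T) - V^{(k-2)}(y\widehat Y_T)]/h \cdot \widehat Y_T^{k-2} \leq V^{(k-1)}((y+h)\widehat Y_T)\widehat Y_T^{k-1}$, with reversed inequalities for $h<0$), then invoke monotone/dominated convergence. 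The alternating-sign structure of the $V^{(k)}$ makes the iteration clean, and the finiteness of the candidate right-hand side on each compact subinterval of $(0,\infty)$ follows from the Bernstein representation of $V^{(n)}$, Tonelli, and the hypothesis $v(y)<\infty$. Inserting $(-1)^n V^{(n)}(y\widehat Y_T) = \int_0^\infty z^{n-1} e^{-yz\widehat Y_T}\mu(dz)$ into $(-1)^n\mathbb{E}[V^{(n)}(y\widehat Y_T)\widehat Y_T^n]$ produces the third equality in \eqref{derv}. As a byproduct, $(-1)^n v^{(n)}(y)\in(0,\infty)$ for all $n$, so $v\in\mathcal{D}\cap\mathcal{C}$.

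For part (3), the $n=1$ case of \eqref{derv} gives $-v'(y) = \int_0^\infty \mathbb{E}[\widehat Y_T e^{-yz\widehat Y_T}]\mu(dz)$. I would identify $\nu$ as the image of the sigma-finite measure $\widehat Y_T(\omega)\,\mu(dz)\,\mathbb{P}(d\omega)$ under the map $(z,\omega)\mapsto z\widehat Y_T(\omega)$ on $(0,\infty)$; Tonelli then produces \eqref{bernRepv}. The properties $\nu(\{0\})=0$ and $\nu((0,\infty))=\infty$ follow from $\widehat Y_T>0$ $\mathbb{P}$-a.s.\ (a consequence of \eqref{NUPBR}) together with $\mu(\{0\})=0$ and $\mu((0,\infty))=\infty$ coming from the Inada conditions on $U$. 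The inversion identity \eqref{propertyNu} is the classical Post--Widder Stieltjes inversion formula applied to the completely monotone function $-v'$ (see, e.g., \cite[Chapter~VII]{Widder41}), the midpoint normalization $\tfrac12(\nu((0,z])+\nu((0,z)))$ being the standard recovery at possible jumps of $\nu$. Part~(1) is then immediate: convex duality gives $(u')^{-1}=-v'$, so $(u')^{-1}\in\mathcal{CM}$ and hence $u\in\mathcal{CMIM}$; analyticity of $u$ on $(0,\infty)$ is automatic since the Laplace representation extends $-v'$ to a holomorphic function on $\{\mathrm{Re}\,y>0\}$, so $u'=(-v')^{-1}$ is real-analytic, and hence so is $u$.

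Part~(4) is a clean consequence of the duality identity $x=-v'(u'(x))$. Differentiating once gives $u''(x) = -1/v''(u'(x)) = f(u'(x))$, so for $n\geq 2$ we have $u^{(n)} = D^{n-2}(f\circ u')$, to which Faà di Bruno's formula (with $(u')^{(j)}=u^{(j+1)}$) applies directly to yield \eqref{deru}; the derivatives $f^{(k)}$ exist because $v\in\mathcal{D}\cap\mathcal{C}$ with $v''>0$. The chief obstacle throughout the argument is the justification of differentiating under the expectation in \eqref{derv}: since $V$ is only one-sided bounded, the estimate must exploit both the alternating-sign structure of its higher derivatives and the a priori finiteness inherited from $v(y)<\infty$ to deploy monotone convergence iteratively. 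Once \eqref{derv} is secured, the remaining assertions are structural consequences of Bernstein's theorem and Faà di Bruno.
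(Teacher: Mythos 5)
Your overall route tracks the paper's quite closely: reduce via Lemma~\ref{lem:same-dual} to $v(y)=\mathbb{E}[V(y\widehat Y_T)]$, differentiate through the expectation inductively to get \eqref{derv}, then read off $v\in\mathcal D\cap\mathcal C$ and pass to $u$ via $u'=(-v')^{-1}$, with the inversion identity \eqref{propertyNu} coming from the Widder inversion theorem and \eqref{deru} from Fa\`a di Bruno applied to $u''=f\circ u'$. The one place you diverge in substance is the justification of differentiating under $\mathbb{E}$. The paper inserts the Bernstein representation $(-1)^kV^{(k)}(y)=\int_0^\infty e^{-yz}z^{k-1}\mu(dz)$ into the difference quotient, produces an explicit $\mathbb{P}\times\mu$-integrable majorant using the elementary bound $\bar z^k e^{-\bar z(y-h_0)}\le M e^{-\bar z(y-h_0)/2}$ and the already-established case $n=1$, and then applies dominated convergence. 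You instead propose a convexity sandwich on the difference quotients of $V^{(k-2)}$, followed by monotone convergence. That is a legitimately different mechanism, and it does work; it is closer in spirit to the dominated-convergence argument the paper uses in Proposition~\ref{prop:finite-n}, and it avoids writing down an explicit majorant. But be careful with two points you glossed over. First, the direction of your sandwich inequality depends on the parity of $k$, not just the sign of $h$: $V^{(k-2)}$ is convex only for even $k$ and concave for odd $k$, so the chain $V^{(k-1)}(y\widehat Y_T)\widehat Y_T^{k-1}\le\cdots\le V^{(k-1)}((y+h)\widehat Y_T)\widehat Y_T^{k-1}$ as you wrote it must be stated for the sign-corrected function $(-1)^{k-1}V^{(k-1)}$, which is uniformly nonnegative, decreasing and convex. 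Second, the finiteness of $\mathbb{E}[(-1)^kV^{(k)}(y\widehat Y_T)\widehat Y_T^k]$ is not ``from Tonelli and $v(y)<\infty$'' in any direct way; in the sandwich approach it really comes from treating the two one-sided limits separately: the left difference quotients increase and are bounded below by an integrable $h_0<0$ member of the family (integrable by the inductive hypothesis at $y$ and $y+h_0$), so the monotone-convergence limit is finite; that finiteness then provides the lower dominating function needed to pass to the limit from the right. If you make those two adjustments explicit, your argument is correct and yields the same conclusions as the paper's.
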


The following theorem specifies the derivatives of optimizers of all orders for both primal and dual problems.
\begin{thm}\label{thm2}
Under  the conditions of Theorem \ref{thm1}, the following assertions hold:
\begin{enumerate}\item
The primal and dual optimizers are related via
\begin{equation}\label{7113}
\widehat X_T(x) = \int\limits_0^\infty e^{ -zu'(x)\widehat Y_T}\nu(dz),\quad x>0.
\end{equation}
\item
Their derivatives are given by
\begin{equation}\label{derX}
\begin{split}
\widehat X^{(1)}_T(x):=\lim\limits_{h\to 0}\frac{\widehat X_T(x+h)-\widehat X_T(x)}{h} 
&= -V''(u'(x)\widehat Y_T)u''(x)\widehat Y_T\\
 &=~\frac{u''(x)}{U''(\widehat X_T(x))}\widehat Y_T>0,
\end{split}
\end{equation}
\begin{equation}\label{derY}
\widehat Y_T^{(1)}(y):=\lim\limits_{h\to 0}\frac{\widehat Y_T(y+h)-\widehat Y_T(y)}{h} = \frac{\widehat Y_T(y)}{y} = \widehat Y_T.
\end{equation}
\item
Recursively, for every $n\geq 2$, the higher-order derivatives are given by 
\begin{displaymath}\begin{split}
\widehat X_T^{(n)}(x):=&\lim\limits_{h\to 0 }\frac{\widehat X_T^{(n-1)}(x + h)-\widehat X_T^{(n-1)}(x)}{h} \\
=&\sum \frac{n!}{k_1!1!^{k_1}k_2!2!^{k_n}\dots k_n!n!^{k_n}}\left(-V^{(1+k_1 +  \dots +k_n)}\left(u'(x)\widehat Y_T\right)\right)\prod\limits_{j=1}^n\left(
{u^{(j+1)}(x)\widehat Y_T}
\right)^{k_j},\\
\end{split}
\end{displaymath}
where  the sum is over all $n$-tuples of nonnegative integers $( k_1, \dots, k_n)$ satisfying 
$\sum\limits_{i = 1}^n ik_i = n$. Trivially,  
$$\widehat Y_T^{(n)}(y):= \lim\limits_{h\to 0 }\frac{\widehat Y_T^{(n-1)}(y + h)-\widehat Y_T^{(n-1)}(y)}{h} = 0.$$
\end{enumerate}
The limits  for the derivatives of  the optimizers above are in probability. However,  because of the multiplicative structure of the dual $\widehat Y(y)=y\widehat Y$,  the limits above can be understood in the stronger sense: for every sequence $h_k\to   0$, the convergence holds for $\mathbb P$-a.e.  $\omega\in\Omega$.
\end{thm}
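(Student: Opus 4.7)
The plan is to reduce the entire statement to two ingredients, both already in hand: the multiplicative structure of the dual optimizer $\widehat{Y}(y)=y\widehat{Y}$ from Lemma~\ref{lem:same-dual}, and the pointwise first-order optimality relation
\[
\widehat{X}_T(x)=-V'\bigl(u'(x)\widehat{Y}_T\bigr)=(U')^{-1}\bigl(u'(x)\widehat{Y}_T\bigr),
\]
which is one of the standard conclusions of utility maximization under \eqref{NUPBR} and \eqref{finDual} (see \cite{KostasEmery,Mostovyi2015}). Substituting the Bernstein representation of the completely monotonic inverse marginal into this identity produces the integral formula \eqref{7113}.

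For the derivatives of the dual optimizer, the identity $\widehat{Y}_T(y)=y\widehat{Y}_T$ is \emph{affine} in $y$, so the Newton quotient equals $\widehat{Y}_T$ for every $h\ne 0$ and every $\omega$; this immediately yields \eqref{derY} and shows that every higher-order $y$-derivative vanishes identically. For $\widehat{X}_T^{(1)}(x)$, I would differentiate $\widehat{X}_T(x)=-V'(u'(x)\widehat{Y}_T)$ in $x$ using the chain rule. The manipulation is legitimate $\omega$-wise because, by Theorem~\ref{thm1}(1), $u$ is analytic (in particular $u''$ exists), $V$ is analytic on $(0,\infty)$ (since $-V'\in\mathcal{CM}$), and $\widehat{Y}_T>0$ holds almost surely under \eqref{NUPBR}. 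The second equality in \eqref{derX} then follows from the elementary identity $\bigl[(U')^{-1}\bigr]'=1/\bigl(U''\circ(U')^{-1}\bigr)$.

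For the higher-order recursive formula in part (3), the plan is to apply Fa\`a di Bruno's formula to the $\omega$-wise composition $x\mapsto -V'\bigl(u'(x)\widehat{Y}_T\bigr)$, with outer function $g=-V'$ and inner function $h(x)=u'(x)\widehat{Y}_T$. Since $h^{(j)}(x)=u^{(j+1)}(x)\widehat{Y}_T$ for every $j\ge 1$, each monomial in the Bell polynomial expansion carries exactly one factor of $\widehat{Y}_T$ per derivative of $h$, so the partition constraint $\sum_{i}ik_i=n$ produces the exponent $\widehat{Y}_T^{k_1+\cdots+k_n}$ and the Bell coefficients $n!/(k_1!\,1!^{k_1}\cdots k_n!\,n!^{k_n})$ emerge exactly as stated. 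A direct induction on $n$ gives an alternative verification: each additional differentiation of $\widehat{X}_T^{(n-1)}(x)$ contributes one further factor of $\widehat{Y}_T$ from the innermost chain-rule step, reproducing the recursion.

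The final assertion about modes of convergence is, in my view, the most conceptually delicate point and deserves separate care. On the dual side it is trivial, since the Newton quotient is identically $\widehat{Y}_T$ for every $\omega$. On the primal side, for each $\omega$ in the full-measure event $\{\widehat{Y}_T>0\}$ the map $x\mapsto\widehat{X}_T(\omega,x)=(U')^{-1}\bigl(u'(x)\widehat{Y}_T(\omega)\bigr)$ is a composition of analytic functions of $x$ alone, and is therefore differentiable of all orders as a purely deterministic function of $x$. Consequently, the Newton quotient converges for every sequence $h_k\to 0$ and every such $\omega$; this yields the stronger sequence-wise almost-sure statement without any appeal to uniform integrability, and convergence in probability is a direct corollary. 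The main (minor) obstacle in the whole argument is precisely this separation of the $\omega$-wise analytic structure from the $y$-parametric multiplicative structure, which is what allows the primal Newton quotients to converge pointwise in $\omega$ along arbitrary sequences.
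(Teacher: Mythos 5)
Your proposal is correct and follows essentially the same route as the paper's own (very terse) proof: identify $\widehat X_T(x)=-V'\bigl(u'(x)\widehat Y_T\bigr)$ as the fundamental relation, substitute the Bernstein representation of the inverse marginal for \eqref{7113}, exploit the affine form $\widehat Y_T(y)=y\widehat Y_T$ for \eqref{derY} and its higher analogues, and apply the chain rule / Fa\`a di Bruno formula $\omega$-wise for \eqref{derX} and part (3). Your added observation that the $\omega$-wise composition is analytic (hence the Newton quotients converge pointwise a.s.\ for every sequence $h_k\to 0$, not merely in probability) cleanly justifies the final remark of the theorem, which the paper leaves implicit. One small point worth flagging: the measure appearing in \eqref{7113} should, by your derivation and by the paper's own invocation of \eqref{bernstein}, be $\mu$ (the Bernstein measure of $(U')^{-1}$) rather than $\nu$ (the Bernstein measure of $-v'$); these are different measures, and the displayed $\nu$ in the theorem statement appears to be a typographical slip in the paper, not a gap in your argument.
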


 Analogous results (and even easier, in many ways)  can be stated for the case $n<\infty 
$.

\begin{prop}\label{prop:finite-n} Fix $n\in \{2,3,\dots\}$
Consider a financial model for which  \eqref{NUPBR} holds, and which is in $\mathcal{SD}(n)$.
Let $U\in \mathcal{CMIM} (n)$  so $V=V_U\in \mathcal D(n)\cap \mathcal C$. Furthermore, assume \eqref{finDual} and that  $V$ satisfies the inequalities
\begin{equation}\label{eq:rt-bounds-n-V}
0<c_k \leq -\frac{y \ V ^{(k+1)}(y)}{ V ^{(k)}(y)}=  \frac{(-y)^{k+1}  V ^{(k+1)}(y)}{ (-y)^{k} V ^{(k)}(y)} \leq d_k<\infty,\quad y>0,~ k=1,\dots, n-1, 
\end{equation} for some constants $c_k, d_k$, $k=1,\dots, n-1$. 
Then, for  the optimal investment problem  \eqref{primalProblem}, the following assertions hold:
\begin{enumerate}\item 
The value function $u\in \mathcal{CMIM} (n)$. 
\item Up to order $n$, the derivatives of  the dual value function $v$ are given by 
\eqref{derv}.
\item The dual value function  $v$ satisfies the  bounds   \eqref{eq:rt-bounds-n-V} with respect to the same constants $c_k$ and $d_k$,  for $k=1,\dots, n-1$.
\end{enumerate}

\end{prop}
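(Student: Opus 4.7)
The plan is to exploit the clean dual representation $v(y)=\mathbb E[V(y\widehat Y_T)]$ from Lemma \ref{lem:same-dual}, inductively differentiate it $n$ times under the expectation using the ratio bounds \eqref{eq:rt-bounds-n-V} as a source of integrable envelopes, and then read off the finite-order complete monotonicity of $-v'$ and the ratio bounds on $v$ from those of $V$. The argument is genuinely simpler than the infinite-degree Theorem \ref{thm1}: no Bernstein representation, and no limiting procedure as $n\uparrow\infty$, is required.

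For part (2), I would proceed by induction on $k\in\{1,\dots,n\}$ that $v^{(k)}(y)=\mathbb E[V^{(k)}(y\widehat Y_T)\widehat Y_T^{k}]$. The base case $k=1$ is standard utility-maximization duality: under \eqref{NUPBR} and \eqref{finDual}, $v\in\mathcal C$ is continuously differentiable with $v'(y)=\mathbb E[V'(y\widehat Y_T)\widehat Y_T]$ and $-v'(y)<\infty$. For the step from $k$ to $k+1$, fix $y>0$ and choose $0<y_0<y$. The mean value theorem applied to the relevant difference quotient yields, for $|h|$ small enough that $y+h>y_0$,
\begin{displaymath}
\left|\frac{V^{(k)}((y+h)\widehat Y_T)-V^{(k)}(y\widehat Y_T)}{h}\right|\widehat Y_T^{k}=|V^{(k+1)}(\xi_h\widehat Y_T)|\widehat Y_T^{k+1},
\end{displaymath}
for some $\xi_h\in[y_0,y+|h|]$. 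Since successive derivatives of $V$ alternate in sign (by $V\in\mathcal D(n)$ and Proposition \ref{3.3}), each $|V^{(j)}|$ is decreasing on $(0,\infty)$, so the right-hand side is dominated by $|V^{(k+1)}(y_0\widehat Y_T)|\widehat Y_T^{k+1}$. Iterating the pointwise bound $|V^{(j+1)}(z)|z\leq d_j|V^{(j)}(z)|$ for $j=1,\dots,k$ gives
\begin{displaymath}
|V^{(k+1)}(y_0\widehat Y_T)|\widehat Y_T^{k+1}\leq \frac{d_1\cdots d_k}{y_0^{k}}|V'(y_0\widehat Y_T)|\widehat Y_T,
\end{displaymath}
whose expectation is finite by the base case. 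Dominated convergence then delivers formula \eqref{derv} at order $k+1$.

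Part (1) follows at once: by $V\in\mathcal D(n)\cap\mathcal C$ the signs obey $(-1)^{k}V^{(k)}\geq 0$ for $k=1,\dots,n$, and this sign is preserved by the representation of part (2), so $-v'\in\mathcal{CM}(n-1)$. Together with $v'(\infty)=0$ (obtained by monotone convergence in $-v'(y)=\mathbb E[(-V'(y\widehat Y_T))\widehat Y_T]$, since $-V'$ is decreasing with $V'(\infty)=0$) and the standard fact $v\in\mathcal C$, this gives $v\in\mathcal D(n)\cap\mathcal C$, equivalently $u\in\mathcal{CMIM}(n)$. For part (3), substituting the representation from part (2) produces
\begin{displaymath}
-\frac{y\,v^{(k+1)}(y)}{v^{(k)}(y)}=\frac{\mathbb E\bigl[\alpha(y\widehat Y_T)\cdot(-1)^{k}V^{(k)}(y\widehat Y_T)\widehat Y_T^{k}\bigr]}{\mathbb E\bigl[(-1)^{k}V^{(k)}(y\widehat Y_T)\widehat Y_T^{k}\bigr]},\quad \alpha(z):=-\frac{zV^{(k+1)}(z)}{V^{(k)}(z)}\in[c_k,d_k],
\end{displaymath}
a positively-weighted average of $\alpha$, which therefore lies in $[c_k,d_k]$ as well.

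The main obstacle is the inductive differentiation: producing a single integrable envelope for the difference quotients uniformly in $h$ near zero. This is where the ratio bounds \eqref{eq:rt-bounds-n-V} and the monotonicity of $|V^{(k+1)}|$ are indispensable; once the envelope is in hand, the transfer of signs and of ratio bounds from $V$ to $v$ is essentially mechanical.
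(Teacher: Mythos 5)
Your proposal follows essentially the same route as the paper's proof: invoke Lemma \ref{lem:same-dual} to write $v(y)=\mathbb E[V(y\widehat Y_T)]$, inductively pass the derivative under the expectation using the mean value theorem and dominated convergence, and transfer the ratio bounds \eqref{eq:rt-bounds-n-V} from $V$ to $v$ as a weighted average. Parts (1) and (3) and the overall scheme are correct.

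There is, however, one genuine gap in the construction of the integrable envelope. You argue that, because $V\in\mathcal D(n)$, ``each $|V^{(j)}|$ is decreasing on $(0,\infty)$,'' and then apply this to $j=k+1$ to replace $\xi_h$ by $y_0$. But $V\in\mathcal D(n)$ (equivalently $-V'\in\mathcal{CM}(n-1)$) only delivers alternating signs up to order $n$: one has $(-1)^jV^{(j)}\ge0$ for $j=1,\dots,n$, and therefore $|V^{(j)}|$ is decreasing only for $j=1,\dots,n-1$, since monotonicity of $|V^{(j)}|$ requires control on $V^{(j+1)}$. At the last inductive step $k=n-1$, you need $|V^{(n)}|$ decreasing, which does not follow from $V\in\mathcal D(n)$ (indeed $V^{(n+1)}$ need not even exist). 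So as written the domination fails precisely where it is needed to reach order $n$. The fix is simply to reverse the order of your two estimates: first iterate the pointwise ratio bound $\phi_{k+1}(z)\,z\le d_k\,\phi_k(z)$, where $\phi_j:=(-1)^jV^{(j)}$, to obtain
\begin{displaymath}
\phi_{k+1}(\xi_h\widehat Y_T)\,\widehat Y_T^{\,k+1}\le \frac{d_1\cdots d_k}{\xi_h^{\,k}}\,\phi_1(\xi_h\widehat Y_T)\,\widehat Y_T \le \frac{d_1\cdots d_k}{y_0^{\,k}}\,\phi_1(\xi_h\widehat Y_T)\,\widehat Y_T,
\end{displaymath}
and only then use that $\phi_1=-V'$ (which is decreasing, as $-V'\in\mathcal{CM}(n-1)$ with $n\ge 2$) to pass from $\xi_h\widehat Y_T$ to $y_0\widehat Y_T$. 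This produces the same envelope $\tfrac{d_1\cdots d_k}{y_0^{k}}\,\phi_1(y_0\widehat Y_T)\widehat Y_T$ you wrote down, but now via a valid chain of inequalities. For comparison, the paper does not iterate the ratio bounds all the way to $-V'$; instead it derives the two-sided doubling estimate \eqref{alpha-beta} from \eqref{eq:rt-bounds-n-V} (following Kramkov--S\^irbu) to compare $(-1)^{k-1}V^{(k-1)}(\xi_h)$ with $(-1)^{k-1}V^{(k-1)}(y\widehat Y_T)$ directly, which avoids any appeal to monotonicity of the higher derivatives; your one-sided variant (after the swap) is equally valid here because the mean value point satisfies $\xi_h\ge y_0$ almost surely.
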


 As mentioned earlier, stochastic dominance of  order  $n\in \{3,  \dots\}\cup \{\infty\}$ is a weaker condition than second order stochastic dominance ($n=2$). Therefore, the  class of models  considered here  is more general than the market models with a maximal element in the  sense of second order dominance in \cite{KS2006b}.  
 On the other hand, for $U\in \mathcal {CMIM}(n)$ and  an $\mathcal{SD}(n)$ model,  Lemma \ref{lem:same-dual} herein and  \cite[Theorem 5]{KS2006b} show that {\it the risk tolerance wealth process} $R(x)$ exists, for every $x>0$.
 
We briefly recall the definition of $R(x)$ in 
  \cite{KS2006b}  as the  maximal wealth process $R(x) = (R_t(x))_{t\in[0,T]}$, such that $R_T(x) = -\frac{U'(\widehat X_T(x))}{U''(\widehat X_T(x))}.$
The process $R(x)$ is equal, up to an initial value, to the   first derivative of the primal optimizer:
$$\lim\limits_{h\to0}\frac{\widehat X_T(x +h) - \widehat X_T(x)}{h}  = \frac{R_T(x)}{R_0(x)},$$
where the limit is in $\mathbb P$ probability.  

We also note that the existence of the   risk tolerance wealth process  is  connected to asymptotic expansions  of second order (see  \cite{KS2006b},  \cite{KS2007}) or
 \cite{MostovyiSirbuModel}). 
 For a formulation and asymptotics with consumption and their relationship to the risk tolerance wealth process, we refer to  \cite{HerdJohannes}. 



\begin{rem}[On an integrability convention]\label{remInt} Recalling a common convention (see, e.g., \cite{Mostovyi2015}), we set 
\begin{equation}\label{751}\mathbb E\left[ V(Y_T)\right] :=  \infty \quad if\quad \mathbb E\left[ V^+(Y_T)\right] =\infty.
\end{equation}
This is done to avoid issues related to $\mathbb E\left[ V(Y_T)\right]$  not being well-defined. 

However, the following argument shows that $\mathbb E\left[ V(Y_T)\right]$ is well-defined for every $y>0$ and $Y\in\mathcal Y(y)$. To see this, take 
an arbitrary $y>0$ and $Y\in\mathcal Y(y)$. Then, from the  conjugacy between $U$ and $V$, we get
$$-V(Y_T) \leq Y_T - U(1)\leq Y_T + |U(1)|.$$
Therefore,  by the supermartingale property of $Y$, we  obtain  $$\mathbb E\left[ V^-(Y_T)\right]\leq y +  |U(1)|<\infty.$$ Therefore, $\mathbb E\left[V(Y_T)\right]$ is well-defined with or without convention \eqref{751}. 

In analogy for the primal problem, we set
\begin{equation}\label{755}
\mathbb E\left[U(X_T)\right]:=-\infty\quad if\quad \mathbb E\left[U^-(X_T) \right] = \infty.
\end{equation}
 Assume that \begin{equation}\label{756}v(y)<\infty\quad for~some~y>0,
\end{equation} which is even weaker than \eqref{finDual}. 

By the argument from the previous paragraph,  we know  that $\mathbb E\left[V(Y_T)\right]\in\mathbb R.$  
For an arbitrary $x>0$ and $X\in\mathcal X(x)$, by conjugacy  between $U$ and $V$, we get 
$$U(X_T) \leq V(Y_T) + X_TY_T\leq  V^+(Y_T) + V^{-}(Y_T) + X_TY_T.$$
Therefore, $$U^+(X_T) \leq V^+(Y_T) + V^{-}(Y_T) + X_TY_T~~\in~~\mathbb L^1(\mathbb P).$$
Thus, $\mathbb E\left[ U(X_T)\right]$ is well-defined for every $x>0$, and $X\in\mathcal X(x)$ with  or without  the convention \eqref{755},  under the minimal assumption \eqref{756}.
\end{rem}

 \begin{rem}[On the positivity of $\widehat X_T^{(1)}$] We note that, in general, the derivative of the primal optimizer with respect to the initial wealth does not have to be a positive random variable, see \cite[Example 4]{KS2006}. However, for models satisfying the assumptions of Theorem \ref{thm1},  
\eqref{derX} implies the strict positivity of $\widehat X_T^{(1)}$. This complements the results in \cite{KS2006b}, see Theorem 4 there, which implies the positivity of $\widehat X_T^{(1)}$ in stochastically dominant models in the sense of the second order stochastic dominance. 
\end{rem}

\begin{proof}[Proof of Theorem \ref{thm1}]
As $U\in\mathcal {CMIM}$, its convex conjugate $V\in \mathcal D\cap C$. Then, Lemma \ref{lem:same-dual} gives that 
 $$v(y) = \mathbb E\left[V(y\widehat Y_T)\right],\ y>0,$$ i.e.,  the dual minimizer is the same up to the multiplicative constant $y$:
  $\widehat Y(y) = y\widehat Y$, $ y>0$.  
The expectation  above is well-defined, see the  discussion  in  Remark  \ref{remInt}. Note that \eqref{NUPBR}, \eqref{finDual} and the structure of the utility function (the Inada conditions, together with the strict monotonicity, concavity, and smoothness) imply the strict concavity and continuous differentiability of both $u$ and $-v$ on $(0,\infty)$; see \cite{KS2003}, \cite{Mostovyi2015}, and \cite{MostovyiNUPBR}. 
For $n= 1$, \eqref{derv} follows from the standard conclusions of the utility maximization theory, as
\begin{equation}\label{781}
-v'(y)y =-\mathbb E\left[V'(\widehat Y_T(y))\widehat Y_T(y) \right]=-\mathbb E\left[V'(y\widehat Y_T)y\widehat Y_T \right]\in(0,\infty),
\end{equation} 
and therefore, $v'(y)  = \mathbb E\left[V'(y\widehat Y_T)\widehat Y_T \right]$, for every $y>0$. 

Next, assume that \eqref{derv} holds for $n=k$, i.e., 
$$v^{(k)}(y)  =  \mathbb E\left[ V^{(k)}\left(y\widehat  Y_T\right)\left(\widehat Y_T\right)^k\right]=  (-1)^k\mathbb E\left[ \int\limits_0^\infty e^{-y\widehat  Y_Tz }z^{k-1}\mu(dz)\left(\widehat Y_T\right)^k\right],\quad y>0,$$
where in  the second equality we have used
 \eqref{bernstein}. Let us recall that \eqref{bernstein} gives
 \begin{displaymath}
 V^{(k)}(y)  = (-1)^k\int\limits_0^\infty e^{-yz}z^{k-1}\mu(dz),\quad y>0.
 \end{displaymath}
 Then, let us consider 
 \begin{equation}\label{763}
 \begin{split}
 &\frac{v^{(k)}(y+h) -v^{(k)}(y)}{h} =
 \frac{1}{h}\mathbb E\left[V^{(k)}((y+h)\widehat Y_T)\widehat Y_T^k - V^{(k)}(y\widehat Y_T)\widehat Y_T^k  \right] \\
 = &(-1)^k \mathbb E\left[\int\limits_0^\infty \frac{(\widehat Y_Tz)^{k-1}\widehat Y_T}{h}\left(e^{-(y+h)\widehat Y_Tz} - e^{-y\widehat Y_Tz}\right)\mu(dz)\right],
 \end{split}
 \end{equation}
 Let us fix $y> 0$. As for every $h\neq 0$, we have 
 $$
 0\leq -\frac{1}{h}\left(e^{-(y+h)\widehat Y_Tz}- e^{-y\widehat Y_Tz}\right)\leq e^{-(y-|h|)\widehat Y_Tz}z\widehat Y_T,$$
we deduce that, for a constant $h_0\in(0,y)$, and every $h\in(-h_0, h_0)$, the following inequalities hold 
   \begin{equation}\label{764}
  \begin{split}
 0\leq& -(\widehat Y_Tz)^{k-1}\frac{1}{h}\left(e^{-(y+h)\widehat Y_Tz} - e^{-y\widehat Y_Tz}\right)\widehat Y_T
 \leq (\widehat Y_Tz)^{k} e^{-\widehat Y_T(y-h_0)z}\widehat Y_T.
 \end{split}
 \end{equation}
Furthermore, as there exists a constant $M$, such that 
 \begin{equation}\label{7111}
 \bar z^{k}e^{-\bar z(y-h_0)}\leq M e^{-\tfrac{1}{2}\bar z(y-h_0)},\quad for~every ~\bar z\geq 0,
 \end{equation} and since,  by \eqref{derv} for $n=1$ (see  also \eqref{781}), we have 
 \begin{equation}\label{7112}\mathbb E\left[\int\limits_0^\infty M\widehat Y_Te^{-\tfrac{1}{2}\widehat Y_T(y-h_0)z} \mu(dz)\right]= -Mv'\left(\tfrac{1}{2}(y-h_0)\right)<\infty,
 \end{equation}
we deduce from \eqref{7111} and \eqref{7112} that the last expression in \eqref{764} is $\mathbb P\times \mu$ integrable.
 Therefore, in \eqref{763}, one can pass to the limit as $h\to 0$ to deduce that \eqref{derv} holds for $n  = k+1$. We conclude that \eqref{derv} holds for every $n\in\mathbb N$. Now, the complete monotonicity of $v$ follows from the complete monotonicity of $V$ and \eqref{derv}. In turn, this implies the analyticity of $v$, see, e.g., \cite{cmAnaliticity}. Further, as $(-1)^nV^{(n)}$ do not vanish (see, e.g., \cite[Remark 1.5]{BookBernsteinFunctions}), we deduce from \eqref{derv} that $(-1)^nv^{(n)}$ are also strictly positive for every $n\in\mathbb N$.  By  \cite[Theorem 4]{KS2003}, $-v$ satisfies the Inada conditions, which imply that $\nu(\{0\}) = 0$ and $\nu((0,\infty)) = \infty$.  Representation \eqref{bernRepv} follows, where \eqref{propertyNu} results from the 
 inversion formula, see \cite[Chapter VII,
Theorem 7a]{Widder41}.

To obtain the properties of $u$, first, we observe that the biconjugacy relations between the  value functions imply that $u'$ exists at every $x>0$, and it is the inverse of $-v'$. This, and since $v'$ is strictly negative on $(0,\infty)$, imply the analyticity of $u$. 
 In turn, \eqref{deru} is the consequence of the Fa\`a di Bruno formula, see \cite[Section 4.3]{Faa}. 
 \end{proof}

\begin{proof}[Proof of Theorem \ref{thm2}]
First, we observe that \eqref{7113} is the consequence of  \eqref{bernstein} and standard assertions of the utility maximization theory. In turn, \eqref{derY} follows from  the optimality of $y\widehat Y$  for every $y>0$, whereas  \eqref{derX} results from the relation 
 $$\widehat X_T(x) = -V'(y\widehat Y_T),\quad{\rm for}\quad y=u'(x).$$ 
 
 The higher order derivatives of the dual and primal optimizers follow from the direct computations and an application  of the Fa\`a di Bruno formula.
\end{proof}

\begin{proof}[Proof of Proposition \ref{prop:finite-n}] From Lemma \ref{lem:same-dual}, we have 
$$v(y)=\mathbb E[V(y\widehat Y_T)], \quad y>0.$$ By \cite[Theorem 4]{KS2003}, $v$ is differentiable and we have
$$v'(y)=\mathbb E[\widehat Y_T V'(y\widehat Y_T)], \quad y>0.$$
Therefore, it only remains to compute the higher order derivatives of $v$, recursively, up to order $n$, as in formula \eqref{derv}. The bounds \eqref{eq:rt-bounds-n-V} for $v$ would follow immediately. In what follows, we show that \eqref{derv} holds up to order $n$.  Assume that, for some $1\leq k\leq n-1$, we have 
\begin{equation}
\label{induction:k-1}(-1)^{k-1}v^{(k-1)}(y)= \mathbb E[(-\widehat Y_T)^{k-1} V^{(k-1)}(y\widehat Y_T)]<\infty, \quad y>0.\end{equation}
Using bounds \eqref{eq:rt-bounds-n-V} and following the proof of \cite[Lemma 3]{KS2006} we obtain that, for any $a>1$, there exist some constants 
$$\alpha _k <1< \beta _k<\infty$$ for which 
\begin{equation}\label{alpha-beta}\alpha _k (-1)^{k-1}V^{(k-1)} (y)\leq (-1)^{k-1}V^{(k-1)} (ay) \leq \beta _k (-1)^{k-1}V^{(k-1)} (y), \quad   y>0.
\end{equation}
Fix $y>0$. 
Then, 
\begin{equation*}\begin{split} \frac{v^{(k-1)}(y+h)-  v^{(k-1)}(y)}h & =\mathbb E\left [(\widehat Y_T)^{k-1} \frac{V^{(k-1)}((y+h)\widehat Y_T)  -V^{(k-1)}(y\widehat Y_T) }h\right] \\ & = \mathbb E\left [(\widehat Y_T)^k V^{(k)}( \xi_h) \right],
\end{split}
\end{equation*}
where $\xi _h$ is a random variable taking values between $y\widehat Y_T$ and $(y+h)\widehat Y_T$. 

Fix $a>1$. Using the bounds \eqref{eq:rt-bounds-n-V} for $V^{(k)}$ in terms of $V^{(k-1)}$,  together with \eqref{alpha-beta}, we conclude that there exists a finite constant $C$, such that for 
 $|h|$  small enough so that
$$ \frac 1a \leq \frac{y-|h|}y \leq \frac{y+|h|}y\leq a,$$ we have
$$\left | (\widehat Y_T)^k V^{(k)}( \xi_h) \right| \leq C \left | (\widehat Y_T)^{k-1} V^{(k-1)}( y\widehat Y_T) \right|.$$
Since the right-hand side above is integrable, according to   \eqref{induction:k-1}, and 
$$\frac{V^{(k-1)}((y+h)\widehat Y_T)  -V^{(k-1)}(y\widehat Y_T) }h\rightarrow \widehat Y_T V^{(k)}(y\widehat Y_T), \quad \mathbb P-a.s,$$
we can use the  Lebesgue dominated convergence theorem to conclude the assertions of part $ii)$. The remaining assertions follow.
\end{proof}
\subsection{\textcolor{black}{An Example}} \textcolor{black}{ The  condition that $\mathcal{Y}$(1) has a maximal element means that, while the market is incomplete with respect to the replication of contingent claims, it behaves like a complete market from the point of view of optimal investment.  Our main result, Theorem \ref{thm1}, may appear restrictive at first, for this reason. However, the counter-example in Section \ref{secExample2} shows that this is actually the best one can hope.   A generic example has to be precisely such a market model with a maximal dual element.  This is the case for a multi-dimensional market driven by Brownian motion with a larger dimension, where the coefficients (market prices of risk) are deterministic. For simplicity, we present a one-dimensional stock driven by two Brownian motions. }

\textcolor{black}{
Let $W^1 = (W^1_t)_{t\in[0,T]}$ and $W^2 = (W^2_t)_{t\in[0,T]}$ be two independent Brownian motions on a complete stochastic basis $(\Omega, \mathcal F,  (\mathcal F_t)_{t\in[0,T]},  \mathbb P)$, where the filtration $(\mathcal F_t)_{t\in[0,T]}$ is generated by $W^1$ and $W^2$. Let us suppose that  there are two traded securities: a riskless asset with zero interest rate, $B_t = 1$, $t\in[0,T]$, and a traded stock, whose dynamics is given by
\begin{displaymath}
{S_t} = S_0 + \int\limits_0^t \mu_s ds + \int\limits_0^t\sigma_s dW^1_s,\quad t\in[0,T].
\end{displaymath}
for some $S_0\in\mathbb R$, and where $\mu$ and $\sigma$ are {\it deterministic} measurable functions on $[0,T]$, such that, for some constant $M>0$,  $-M\leq \mu_t\leq M$ and $\frac 1M \leq\sigma_t\leq M$, $t\in[0,T]$. 
}

\textcolor{black}{
Next,  consider the process $\widehat Y$ given by 
\begin{equation}\label{hatYexa}
 \widehat Y_t = \exp\left( -\int\limits_0^t \frac{\mu_s}{\sigma_s}dW^1_s -\frac 12\int\limits_0^t \frac{\mu^2_s}{\sigma^2_s}ds\right),\quad t\in[0,T]. 
\end{equation}
Following the argument in \cite[Example 7, p. 2185]{KS2006b}, one can show that 
\begin{equation}\label{9223}
\widehat Y_T {\succeq}_2 Y_T,\quad {\rm for~every\quad}Y\in\mathcal Y(1),
\end{equation}
in the sense of Definition \ref{def:dominance-fin}, where we recall that 
the set of supermartingale deflators $\mathcal Y(1)$ is given in \eqref{defY}.
}

\textcolor{black}{The reader may note that the market $(B, S, (\mathcal{F}^S_t)_{0\leq t\leq T})$ is complete, while the original market  $\left(B, S, (\mathcal F_t)_{0\leq t\leq T}\right)$, with all the information available for investment, is incomplete.} 
\textcolor{black}{
In such a market (either one), let us consider the optimal investment problem  \eqref{primalProblem}. If  $U\in \mathcal{CMIM}$ and  \eqref{finDual} holds, then, with $\widehat Y$ being given by \eqref{hatYexa}, the properties of the primal and dual value functions are given by the assertions of Theorems \ref{thm1}, whereas the properties of the optimizers are given by the conclusions of Theorem \ref{thm2}. If, instead, one supposes that for a fixed $n\in \{2,3,\dots\}$,  $U\in \mathcal{CMIM} (n)$ and is such that its convex conjugate 
 $V$ satisfies the inequalities
\eqref{eq:rt-bounds-n-V}
for some constants $c_k$, $d_k$, $k=1,\dots, n-1$, then, under \eqref{finDual}, we obtain from Proposition \ref{prop:finite-n}  that its assertions  apply.  
}
%
\section{counterexample 1: $\mathcal{SD}\left( \infty \right) $ market
model and $U\protect \notin \mathcal{CMIM}$ }

\label{exampleMihai} We show that the analyticity of the value function may  
fail if the utility is not $\mathcal{CMIM}$, even if it is analytic, and
even if the market model is complete, and thus in the $\mathcal{SD}%
\left( n\right) $ class for every  $n\in \left \{ 2,3,...\right \} \cup \{ \infty \}$. 
As the construction shows, we will be using completely monotonic functions
of finite order. Working with this class allows to tailor the assumptions on
the utility function so that differentiability holds up to order $n$, but
fails at order $n+1$, for \textit{any} choice of $n\in\mathbb N$.

\begin{prop}\label{prop:example} Fix $n\geq 1$. There exists a complete market model and an analytic utility function $U:(0, \infty)\rightarrow \mathbb R$  such that  $$U\in \mathcal{CMIM}(n+1) \Longleftrightarrow V\in \mathcal D(n+1)\cap \mathcal C,$$ where the dual $V$  satisfies  the bounds \eqref{eq:rt-bounds-n-V} (up to order  $k=n-1$, but not up to order $k=n$), and for which the  conjugate value functions  $u$ and $v$ satisfy
$$u\in \mathcal{CMIM} (n)  \Longleftrightarrow v\in\mathcal D(n)\cap C,$$
 together with identical bounds  \eqref{eq:rt-bounds-n-V} up to order $k=n-1$,  but with
$$(-1)^{n+1} v^{(n+1)}(1)= \infty. $$
\end{prop}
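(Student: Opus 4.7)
My plan is to exhibit the counterexample in four stages: choose a complete market, specify the utility through its convex conjugate $V$, inherit the lower-order bounds to the value function, and force the blow-up at order $n+1$ via Fatou's lemma.

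For the market, the simplest choice is a one-dimensional Black--Scholes model with constant coefficients on $[0,T]$. Completeness gives $\mathcal{Y}(1)=\{\widehat Y\}$, so the model is trivially in $\mathcal{SD}(m)$ for every $m\geq 2$; moreover $\widehat Y_T$ is strictly positive and log-normally distributed, and $v(y)=\mathbb E[V(y\widehat Y_T)]$ for any $U\in-\mathcal C$ for which $v$ is finite, independently of any $\mathcal{CMIM}$-type structure on $U$. For the utility, I would specify $V$ through a Bernstein representation $-V'(y)=\int_0^\infty e^{-yz}\rho(z)\,dz$ with $\rho\geq 0$ chosen so that (i) the moments $\int_0^\infty z^k\rho(z)e^{-yz}\,dz$ are finite for $y>0$ and $k=0,\dots,n$, placing $V\in\mathcal D(n+1)\cap\mathcal C$; (ii) the ratios $-yV^{(k+1)}(y)/V^{(k)}(y)$ are trapped in compact intervals of $(0,\infty)$ for $k=1,\dots,n-1$; (iii) the ratio at $k=n$ is unbounded (e.g., as $y\downarrow 0$), so that \eqref{eq:rt-bounds-n-V} fails at $k=n$; and (iv) $\mathbb E[(-1)^{n+1}V^{(n+1)}(\widehat Y_T)\widehat Y_T^{n+1}]=+\infty$. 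Analyticity of $U$ is automatic because $(U')^{-1}=-V'$ is the Laplace transform of a non-negative measure and hence real-analytic on $(0,\infty)$, from which real-analyticity of $U$ follows by inversion.

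Once $V$ and the market are fixed, the proof of Proposition \ref{prop:finite-n} (applied up to the finite order $n$) gives $v^{(k)}(y)=\mathbb E[V^{(k)}(y\widehat Y_T)\widehat Y_T^k]$ for $k=0,\dots,n$, and the pointwise bounds on the $V$-ratios translate to the same $[c_k,d_k]$-bounds on the $v$-ratios by averaging: the $v$-ratio at order $k$ is a positive-kernel weighted average of the $V$-ratios evaluated at $y\widehat Y_T$, so the envelope transfers. For the blow-up at order $n+1$: since $(-1)^nV^{(n)}$ is non-negative and non-increasing in $y$, the quantity $(-1)^{n+1}\bigl(V^{(n)}((1+h)\widehat Y_T)-V^{(n)}(\widehat Y_T)\bigr)\widehat Y_T^n/h$ is non-negative for $h>0$, and Fatou's lemma applied to these difference quotients gives
$$(-1)^{n+1}v^{(n+1)}(1)\ \geq\ \mathbb E\!\left[(-1)^{n+1}V^{(n+1)}(\widehat Y_T)\widehat Y_T^{n+1}\right]\ =\ +\infty$$
by property (iv). Combined with $V\in\mathcal D(n+1)\cap\mathcal C$, this yields both $v\in\mathcal D(n)\cap\mathcal C$ with the stated lower-order bounds and the desired failure $(-1)^{n+1}v^{(n+1)}(1)=\infty$.

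The main obstacle is Stage 2: constructing a single $\rho$ with exactly this regime change at order $n$. Ratios at successive orders of a completely monotonic function are tightly linked through $\rho$, so producing bounded envelopes at orders $1,\dots,n-1$ together with loss of control at order $n$ and divergence of the $(n+1)$-st $\widehat Y_T$-weighted moment is delicate. I expect a workable choice to take the form $\rho=\rho_0+\rho_1$, with $\rho_0$ a pure power density providing the background bounded-ratio behaviour at all lower orders and $\rho_1$ a small, precisely placed correction (a heavy-tail contribution at infinity, say $\rho_1(z)\propto z^{-n-1}$ on a suitable range) responsible both for breaking the $k=n$ ratio and for driving the $\widehat Y_T$-weighted $(n+1)$-st moment to infinity against the log-normal density.
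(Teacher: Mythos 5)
The three structural steps you identify — applying Proposition \ref{prop:finite-n} up to order $n$ to get $v^{(k)}(y)=\mathbb E[V^{(k)}(y\widehat Y_T)\widehat Y_T^k]$, transferring the ratio bounds from $V$ to $v$ by positive-kernel averaging, and using Fatou's lemma on the difference quotient at order $n+1$ — all coincide with the paper's proof and are correct. The gap is that you never actually construct the utility and the market; you defer the construction to an unspecified $\rho=\rho_0+\rho_1$ and, more importantly, the pairing you envision (a Black--Scholes log-normal $\widehat Y_T$ together with a continuous Bernstein density) does not appear to be achievable.

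Here is the obstruction. Writing $\phi(z):=\mathbb E[\widehat Y_T^{\,n+1}e^{-z\widehat Y_T}]$, so that $\mathbb E\bigl[(-1)^{n+1}V^{(n+1)}(\widehat Y_T)\widehat Y_T^{\,n+1}\bigr]=\int_0^\infty z^n\rho(z)\phi(z)\,dz$, one has for log-normal $\widehat Y_T$ the estimate $\phi(z)\sim z^{-(n+1)}\exp\bigl(-(\log z)^2/(2\sigma^2)\bigr)$ (super-polynomial decay). For your divergence (iv) you would therefore need $\rho(z)$ to grow super-polynomially (roughly $\exp((\log z)^2/(2\sigma^2))$) along a non-negligible sequence $z\to\infty$; but the $y\downarrow 0$ behaviour of all the lower derivatives $(-1)^kV^{(k)}(y)=\int z^{k-1}\rho(z)e^{-yz}\,dz$ is governed by precisely this tail, and a Laplace-method computation shows that a super-polynomial tail makes every ratio $-yV^{(k+1)}(y)/V^{(k)}(y)$ blow up as $y\downarrow 0$, destroying (ii) for all $k$, not just $k=n$. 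Conversely, the concrete tail you suggest, $\rho_1(z)\propto z^{-n-1}$, is dominated by the power background $\rho_0(z)=z^{-q}$ at infinity and leaves every ratio (including $k=n$) bounded, so it fails (iii) and (iv). The paper avoids this tension altogether: it fixes $V$ by a multiplicative analytic perturbation at the level of the $n$-th derivative, $V^{(n)}=f\bar V^{(n)}$ with $\bar V(y)=y^{-1}$ and an $f$ whose derivative has narrow Gaussian spikes of height $\sim i^2$ at the integers (Lemma \ref{lemma:analytic}), and then chooses a \emph{discrete} martingale density $Z$ supported on $\{1/2,1,2,\dots\}$ with weights $\sim i^{-3}$. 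The divergence $\mathbb E[(-Z)^{n+1}V^{(n+1)}(Z)]=\infty$ comes from \emph{sampling} $-f'$ at the spikes, $\sum -f'(i)q_i\sim\sum 1/i=\infty$, even though $-f'$ is globally integrable; a continuous reference law with thin tails cannot reproduce this, which is why the market and the utility must be built jointly and why the discrete construction is not incidental.
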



We will need the following Lemma.

\begin{lem}\label{lemma:analytic} There exists an analytic function $f:(0,\infty)\rightarrow \mathbb{R}$ with the following properties
\begin{enumerate}
\item $1\leq f\leq 2$, 
\item $f'<0$,
\item $-f'(i)\geq  \frac {i^2}C, \ i=1,2,\dots$, for some  constant $C>0$.
\end{enumerate}\end{lem}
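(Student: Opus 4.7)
The plan is to construct $f$ as an absolutely convergent superposition of analytic sigmoids localized at the integers. Take the building block $g(y) := 1/(1+e^y)$, which is strictly decreasing from $1$ to $0$, satisfies $g'(0)=-1/4$, and extends meromorphically to $\mathbb{C}$ with simple poles only at $y \in i\pi(2\mathbb{Z}+1)$. For a rapidly growing sequence $\lambda_k := 16\cdot 2^k k^2$, I would set
$$f(x) := 1 + \sum_{k=1}^{\infty} 2^{-k}\,g\bigl(\lambda_k(x-k)\bigr), \quad x>0.$$
Granted analyticity, the three required properties are immediate. Since $0<g<1$, the tail lies in $(0,1)$, giving $1<f<2$. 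Termwise differentiation yields $f'(x) = -\sum_k 2^{-k}\lambda_k\, e^{\lambda_k(x-k)}/(1+e^{\lambda_k(x-k)})^2 < 0$; and at $x=i\in\mathbb{N}$ every summand is non-positive, so retaining only the $k=i$ contribution gives $-f'(i) \geq \tfrac14\,2^{-i}\lambda_i = 4 i^2$.

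The nontrivial step is the analyticity of $f$ on $(0,\infty)$. Fix $x_0>0$ and let $k_0$ be an integer closest to $x_0$. Since the $k$-th summand has poles at $z = k + i\pi(2m+1)/\lambda_k$, I would choose the complex disc $D_0 := \{z\in\mathbb{C} : |z-x_0|<r_0\}$ with $r_0 := \min\bigl(1/4,\ \pi/(2\lambda_{k_0})\bigr)$, which avoids the poles of every summand. By Weierstrass's theorem it then suffices to produce a uniform bound $\sup_{z\in D_0}|g(\lambda_k(z-k))|\leq M$ with $M$ independent of $k$, since then $\sum_k 2^{-k}M<\infty$ and the sum is analytic on $D_0$.

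Using the identity $|1+e^w|^2 = 1 + 2e^{\Re w}\cos(\Im w) + e^{2\Re w}$, the estimate splits into two regimes. For $k=k_0$, the argument $\lambda_{k_0}(z-k_0)$ has imaginary part of modulus at most $\lambda_{k_0}r_0\leq \pi/2$, so $\cos(\Im w)\geq 0$, which forces $|1+e^w|^2\geq 1$ and hence $|g|\leq 1$. For $k\neq k_0$, one has $|\Re(\lambda_k(z-k))|\geq \lambda_k(|k-x_0|-r_0)\geq \lambda_k/4$, so the elementary bound $|1+e^w|\geq |e^{\Re w}-1|\geq 1-1/e$ yields $|g|\leq e/(e-1)$. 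Thus $M:= e/(e-1)$ works uniformly.

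The principal obstacle is reconciling two competing effects: the radius of complex analyticity of the $k$-th summand shrinks like $\pi/\lambda_k \to 0$, yet analyticity of the series must be verified on a single disc around $x_0$. It is resolved by exploiting the asymmetry between the critical index $k_0$—for which the disc is chosen small enough to confine the argument of $g$ to the good strip $\{|\Im|\leq \pi/2\}$—and all other indices, whose arguments necessarily have large real part, placing $g$ in a region where it is universally bounded regardless of the imaginary part.
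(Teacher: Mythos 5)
Your proof is correct, and it follows a genuinely different construction from the paper. The paper builds $f$ by \emph{integrating} a sum of Gaussian bumps: it sets $g(z)=\sum_{i\ge 1}i^{-2}\,\sigma_i^{-1}e^{-(z-i)^2/(2\sigma_i^2)}$ with $\sigma_i=i^{-4}$, observes that each summand is entire and the series converges uniformly on compact subsets of $\mathbb C$ (because the Gaussian decay in $i$ is super-exponential on any compact), and then defines $f(y):=2-\tfrac{1}{C}\int_0^y g$. Because the building block is entire, the analyticity of $g$ is immediate, but one must integrate and normalize to get $f$ into $[1,2]$. You instead sum step-like sigmoids $g(\lambda_k(x-k))$ directly so that $f$ is manifestly trapped in $(1,2)$ without any integration or normalization, and its derivative is a superposition of negative bumps of the required height by a simple weight/slope calculation. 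The trade-off is that your building block has poles at $k+i\pi(2m+1)/\lambda_k$, so the holomorphy of the series is a local statement: you must shrink the disc around $x_0$ to below $\pi/(2\lambda_{k_0})$ for the nearest summand and then exploit the fact that all other summands have arguments with large real part, where $|1+e^w|$ is uniformly bounded away from zero. That case split (small imaginary part for $k=k_0$, large real part for $k\ne k_0$) is exactly the right decomposition and your bounds check out. The two approaches make opposite choices --- entire building block plus an integration, versus a meromorphic building block plus a local pole-avoidance argument --- but both are clean; yours is a hair more work on the analyticity side and a hair less everywhere else.
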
 

\begin{proof}[Proof of Proposition \ref{prop:example}] Consider the auxiliary dual value function $\bar V(y):=y^{-1}$.
Consider now the new utility function $V$ defined as \begin{equation}\nonumber
V (y):=\int\limits_y ^{\infty}  \int\limits_{y_1}^{\infty} \dots \int\limits_{y_{n-1}}^{\infty} (-1)^n f(y_n)\bar V^{(n)}(y_n)    dy_n \dots  dy_2  dy_1>0.
\end{equation}
Note that 
the intuition behind this definition comes from setting $V$ at the level of the $n$-th order derivative,  
$$ V ^{(n)}(y):=f(y)\bar V ^{(n)}(y), \quad y>0,$$
and then recovering $V$ by integration.

Since $1\leq f\leq 2$  and using the integral representations
$$ (-1)^k \bar V^{(k)}(y_k)= \int\limits_{y_k}^{\infty} \dots ~
\int\limits_{y_{n-1}}^{\infty} (-1)^n \bar V^{(n)}(y_n)    dy_n 
~\dots  dy_{k+1},\quad y_k>0,$$
$$ (-1)^k  V^{(k)}(y_k)= \int\limits_{y_k}^{\infty} \dots ~
\int\limits_{y_{n-1}}^{\infty} (-1)^n f(y_n) \bar V^{(n)}(y_n)    dy_n 
~\dots  dy_{k+1},\quad y_k>0,$$
we obtain  bounds for derivatives of $V$ in terms of derivatives of the same order of $\bar V$,
\begin{equation}\label{eq:Vk} (-1)^k \bar V^{(k)}(y)\leq  (-1)^k  V^{(k)}(y) \leq 2  (-1)^k \bar V^{(k)}(y)\quad y>0,~k=0,1,\dots, n.
\end{equation}
Since  $$ \bar V''(y)=2y^{-3}, \bar V'''(y)=-6y^{-4},\dots , \bar V^{(n+1)}(y)= (-1)^{n+1} C_{n+1} y^{-n-2},\quad y>0,$$
for some  explicit positive  constants $C_k$,  $\bar V$ satisfies   some  bounds on higher order risk tolerance coefficients  of the type \eqref{eq:rt-bounds-n-V} up to order $k=n-1$. Using the integral representations above, 
\eqref{eq:Vk} yields similar bounds for 
 risk tolerance type coefficients for $ V$: 
for every $ k=1,\dots, n-1$, there exist $0<c_k < d_k < \infty$
such that 
\begin{equation}\nonumber
c_k \leq -\frac{y V ^{(k+1)}(y)}{ V ^{(k)}(y)}=  \frac{(-y)^{k+1}  V ^{(k+1)}(y)}{ (-y)^{k} V ^{(k)}(y)} \leq d_k,\quad y>0,~ k=1,\dots, n-1.
\end{equation}

Since $-f'(i)\geq \frac{i^2}{C}$, we  choose weights $q_i:=\frac 1{i^3}>0$, such that
$$s_1:=\sum _{i=1}^{\infty} i q_i<\infty,\ \ \sum _{i=1}^{\infty} -f'(i)q_i=\infty.$$ Denoting by
$$s_0:= \sum _{i=1}^{\infty}  q_i < \sum _{i=1}^{\infty} iq_i=s_1<\infty,$$  we define 
$$\varepsilon :=\frac{\frac 12}{\frac{s_1}{s_0}-\frac 12}\in (0,1)$$
and  consider a random variable $Z$ such that 
 $$\mathbb P(Z=\frac 12)=1-\varepsilon, \  \mathbb P (Z=i)=p_i:=\varepsilon\frac {q_i}{s_0}, \ \ i=1, 2, 3,  \dots.$$
Then, we have  $$\mathbb E[Z]= \frac 12 \left(1- \varepsilon\right) +\frac{\varepsilon}{s_0} s_1 =\frac 12 +\varepsilon \left(\frac{s_1}{s_0}-\frac 12\right)=1.$$ 
Next, consider any market with the unique martingale measure with density $Z$. 
The dual value function is finite since $Z$ is bounded from below. Therefore, 
\begin{equation} v (y)=\mathbb E[  V (yZ))]<\infty, \quad y>0,
\end{equation}
i.e., \eqref{finDual} holds. 
Using Proposition  \ref{prop:finite-n}, we  obtain that 
\begin{equation}\label{eq:bfVk}
0< (-1)^k v ^{(k)}(y)=\ (-1)^k \mathbb E[  V ^{(k)} (yZ)Z^k)]<\infty, \quad y>0, \ k=1,\dots, n.
\end{equation}
Therefore, $v\in \mathcal D(n)\cap \mathcal C$ and, in particular,  the $n$-th derivative is given by
\begin{equation}\label{eq:vn}
 v ^{(n)}(y)= \mathbb E[   V ^{(n)} (yZ)Z^n)], \quad y>0.
\end{equation}
The $n+1$ derivative of $V$ is
$$V^{(n+1)}(y)=f(y)\bar V^{(n+1)}(y)+f'(y)\bar V^{(n)}(y).$$
One should note that
$$(-1)^{n+1}V^{(n+1)}(y)= f(y) (-1)^{n+1}\bar V^{(n+1)}(y) +(-f'(y) ) (-1)^n \bar V^{(n)}(y)>0,$$
and 
$$(-y)^{(n+1)}   V^{(n+1)}(y)> -C_n f'(y).$$  By construction, $\mathbb E[-f'(Z)]=\infty,$ so 
$\mathbb E[(-Z)^{n+1} V^{(n+1)}(Z)]=\infty.$ 
Finally, from \eqref{eq:vn} we have 
$$  (-1)^{n+1}\frac {v^{(n)}(y)-v^{(n)}(1)}{y-1}  = (-1)^{n+1}\frac {\mathbb E[   V ^{(n)} (yZ)Z^n) -V ^{(n)} (Z)Z^n)]}{y-1}=\mathbb{E}[V^{(n+1)}(\xi) (-Z)^{n+1}],$$ for some random variable $\xi$ taking values between $Z$ and $yZ$. 
Since
$$0\leq V^{(n+1)}(\xi) (-Z)^{n+1}\rightarrow  V^{(n+1)}(Z) (-Z)^{n+1},$$
 we  can now apply Fatou's lemma to obtain 
$$(-1)^{n+1}v^{(n+1)}(1)= \infty. $$
\end{proof}

\begin{proof}[Proof of Lemma \ref{lemma:analytic}]
Consider the Gaussian densities (up to a multiplicative factor) with mean $\mu$ and standard deviation  $\sigma>0$, given by
$$g_{\mu, \sigma }(x):=\frac{1}{\sigma} e^{-\frac {(x-\mu)^2}{2\sigma ^2}},\quad x\in \mathbb R.$$
Then,  we have 
$$\int\limits_{-\infty}^{\infty}  g_{\mu, \sigma }(x)dx=\sqrt {2\pi},  \quad g_{\mu,\sigma ^2}(\mu)=\frac 1{\sigma}.$$
Next, let 
$$g(z):=\sum _{i=1}^{\infty} \frac 1{i^2}   g_{i, i^{-4}} (z),\quad z \in \mathbb C.$$
While not obvious, it is easy to see that the series converges uniformly  on compacts (in the complex plane), so it is analytic (entire). In addition, 
$$\int\limits_{-\infty}^{\infty}  g(x)dx \leq \sqrt{2\pi}  \sum _{i=1}^{\infty} \frac 1{i^2}=:C<\infty.$$ Furthermore, we have
$$g(i)\geq  \frac 1{i^2}   g_{i, i^{-4}}(i)=   \frac 1{i^2} i^4=i^2.$$
Finally, we set 
$$f(y):=2-\frac 1 C \int\limits_0^y g(x)dx, \quad y>0.$$
One can then see that $f$ satisfies the desired properties.
\end{proof}

\begin{rem} To obtain an example with a  utility  having positive third derivative  $ U{'''}>0$,   we just  use   Proposition \ref{prop:example} for  $n=2$.   The fact that $ -V'''= (-1)^{(n+1)}V^{(n+1)}>0$  ensures, by duality, that the corresponding utility 
$$ U(x):=\inf _{y>0}\left (V(y)+xy\right), \quad x>0,$$
satisfies the desired condition
$ U'''(x)>0$ for all $  x>0.$
\end{rem}
\section{Counterexample 2: non-stochastically dominant models and lack of differentiability}\label{secExample2}
 We show that for \textit{any} non-homothetic utility $%
U\in \mathcal{-C}$ with $U\in C^{2}\left( \left( 0,\infty \right) \right)$, we may construct a non-$\mathcal{SD}\left( \infty \right) $ market model
such that, at some point $x>0$, the two-times differentiability of $u$ fails. 
We recall that standard results in utility maximization theory, in the form
of Kramkov and Schachermayer \cite{KS}, assert 
the continuous differentiability of the value functions. The result below
demonstrates that differentiability might cease to exist at the very next
order (even with a $\mathcal{CMIM}$ utility). 

We note that, due to the multiplicative structure $\widehat{Y}(y)=y\widehat{Y}$ under the assumptions of Theorem \ref{thm1}, we do not
make any sigma-boundedness assumption, as in \cite{KS2006}. Our counterexample  is somewhat related to the sigma-boundedness counterexample from \cite{KS2006}, but it  is stronger: we construct a (counterexample) model for  {\it every} Inada utility function with non-constant relative risk aversion.

Let $U\in \mathcal{-C}$ with $U\in C^{2}\left( \left( 0,\infty \right)
\right) ,$ having non-constant relative risk aversion 
\begin{equation}\label{A-function}
A(x):=-\frac{U^{\prime \prime }(x)x}{U^{\prime }(x)},x>0.  
\end{equation}
The assumption $U\in C^{2}\left( \left( 0,\infty \right) \right) $ is
without loss of generality. We may also choose $U\in \mathcal{CMIM}.$

\begin{prop}For any non-homothetic\footnote{I.e., such that $A\neq const$, where $A$ is defined in \eqref{A-function}.} utility $U\in \mathcal{-C},$
with $U\in C^{2}\left( \left( 0,\infty \right) \right)$, and thus,  $U\in\mathcal {CMIM}(2)$, there exists a non-%
$\mathcal{SD}\left( \infty \right) $ market model such that the value
function is not twice differentiable at some $x>0$.
\end{prop}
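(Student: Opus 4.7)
I would construct a simple finite-state one-period market whose dual feasible set $\mathcal{Y}(1)$ reduces to a line segment $\{Z_\lambda:=(1-\lambda)Z_0+\lambda Z_1:\lambda\in[0,1]\}$ between two strictly positive densities $Z_0,Z_1$ with $\mathbb{E}[Z_i]=1$, chosen so that neither $Z_0\succeq_\infty Z_1$ nor the converse holds. A three-state, one-stock model, tuned by the terminal stock values, provides such a segment via the one-parameter family of equivalent martingale measures (extended, if necessary, to supermartingale deflators so the extreme points are strictly positive). Non-$\succeq_\infty$-comparability is an open condition on parameters, checked by comparing $\mathbb{E}[e^{-zZ_i}]$ at two values of $z$, so the model lies outside $\mathcal{SD}(\infty)$.

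\textbf{Transition driven by non-homotheticity.} For $y>0$, $v(y)=\min_{\lambda\in[0,1]}\phi(y,\lambda)$ with $\phi(y,\lambda):=\mathbb{E}[V(yZ_\lambda)]$ strictly convex in $\lambda$; let $\lambda^*(y)$ be the unique minimizer and $\psi(y):=\mathbb{E}[(Z_1-Z_0)V'(yZ_0)]$. The KKT condition reads $\lambda^*(y)=0$ iff $\psi(y)\ge 0$. For homothetic $V(y)=-y^q/q$ one has $\psi(y)=-y^{q-1}\mathbb{E}[(Z_1-Z_0)Z_0^{q-1}]$, a function of constant sign, so $\lambda^*$ is constant and no transition occurs. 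For non-homothetic $U\in C^2$, the quantity $yV''(y)/(-V'(y))=1/A(I(y))$ is not constant, and by tuning the market parameters I would arrange
\[
\psi(y_0)=0\qquad\text{and}\qquad\psi'(y_0)=\mathbb{E}[(Z_1-Z_0)Z_0 V''(y_0Z_0)]<0
\]
at some $y_0>0$, giving $\lambda^*(y)=0$ on some interval $(y_0-\delta,y_0]$ and $\lambda^*(y)\in(0,1)$ on $(y_0,y_0+\delta)$.

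\textbf{Jump of $v''$ and transfer to $u$.} On $y<y_0$ near $y_0$, $v(y)=\mathbb{E}[V(yZ_0)]$, whence $v''(y_0^-)=\mathbb{E}[Z_0^2V''(y_0Z_0)]$. On $y>y_0$, differentiating the envelope expression $v'(y)=\mathbb{E}[Z_{\lambda^*(y)}V'(yZ_{\lambda^*(y)})]$, using the interior first-order condition to cancel the $V'$ term, and inserting $\lambda^{*\prime}(y_0^+)=-\psi'(y_0)/\bigl(y_0\mathbb{E}[(Z_1-Z_0)^2V''(y_0Z_0)]\bigr)$ obtained from the implicit function theorem applied to the FOC, I compute
\[
v''(y_0^+)-v''(y_0^-)=-\frac{\bigl(\psi'(y_0)\bigr)^2}{\mathbb{E}[(Z_1-Z_0)^2V''(y_0Z_0)]}<0.
\]
Thus $v''$ jumps at $y_0$. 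Since $v'$ is continuously strictly decreasing and $u'=(-v')^{-1}$, the inverse-function identity $u''(x)=-1/v''(u'(x))$ transfers the one-sided limits, producing a jump of $u''$ at $x_0:=-v'(y_0)$, and violating $u\in C^2$ at $x_0$.

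\textbf{Main obstacle.} The delicate part is producing, for \emph{every} non-homothetic $U$, a market simultaneously realizing $\psi(y_0)=0$ and $\psi'(y_0)\neq 0$ at some $y_0>0$. Were this impossible, then along the surface $\{\psi(y_0)=0\}$ in parameter space $\psi'(y_0)$ would vanish identically, meaning the two linear functionals $\zeta\mapsto\mathbb E[\zeta V'(y_0Z_0)]$ and $\zeta\mapsto\mathbb E[\zeta Z_0 V''(y_0Z_0)]$ share the same kernel on admissible differences $\zeta=Z_1-Z_0$. For $Z_0$ taking sufficiently many distinct values this forces the pointwise proportionality $-V'(y)\propto yV''(y)$ on the range of $y_0Z_0$; varying $Z_0$ through the market's free parameters extends the identity to all $y>0$, so $V$ must be a pure power and $U$ homothetic, contradicting the hypothesis. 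This contradiction delivers the required market.
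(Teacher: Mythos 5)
Your core mechanism is genuinely different from the paper's. The paper builds an infinite-state one-period model in which the stock itself is the primal optimizer at $x=1$, and then derives non-$C^2$-ness from an \emph{asymmetric admissibility constraint}: the unconstrained optimal ``delta'' $\widehat\Delta<1$ is reachable inside $\mathcal X(1+\varepsilon)$ for $\varepsilon>0$ but not for $\varepsilon<0$ (where the constraint $X_1\geq 0$ forces $\widetilde\Delta\geq 1$), so the left- and right-second divided differences of $u$ at $1$ have different limits. Your plan instead produces a jump in $v''$ via a \emph{corner-to-interior transition of the dual minimizer} $\lambda^*(y)$ on a segment $[Z_0,Z_1]$ of dual elements. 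The jump computation itself is correct: under the interior first-order condition the envelope and implicit-function calculations give
$v''(y_0^+)-v''(y_0^-)=-\psi'(y_0)^2\big/\mathbb E\left[(Z_1-Z_0)^2 V''(y_0 Z_0)\right]<0$, and $u''(x)=-1/v''(u'(x))$ then transfers the jump to $u''$.

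However, there is a genuine gap in the realization of this transition. In a three-state one-period one-stock market, the set of equivalent local martingale densities is a segment whose extreme points $Z_0,Z_1$ necessarily put zero mass on one state (they are supported on exactly two states). Hence $Z_0$ is \emph{not} strictly positive, and if $U(\infty)=\infty$ (equivalently $V(0^+)=\infty$, which holds for logarithmic and positive-power utilities and is not excluded by the hypothesis $U\in C^2$), then $\mathbb E[V(y Z_0)]=\infty$ and the strictly convex function $\lambda\mapsto\mathbb E[V(yZ_\lambda)]$ is minimized in the open interval $(0,1)$ for \emph{every} $y>0$. No corner transition ever occurs, so no jump in $v''$ is produced. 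Your parenthetical ``extended, if necessary, to supermartingale deflators so the extreme points are strictly positive'' does not repair this: in a finite-state one-period model, the maximal elements of $\mathcal Y(1)$ with $\mathbb E[Y_T]=1$ are exactly the martingale densities, and any dominated deflator is never a dual minimizer since $V$ is strictly decreasing; adding more states keeps the extreme martingale densities two-point supported, while making the face structure higher-dimensional rather than a segment. So the blueprint would, as written, only cover utilities with $U(\infty)<\infty$, which is strictly weaker than the stated proposition. A secondary weakness is that the ``main obstacle'' paragraph is a plausibility argument, not a construction: the step from ``the two linear functionals share a kernel on admissible $\zeta=Z_1-Z_0$'' to ``$-V'\propto yV''$ on all of $(0,\infty)$'' would require letting $y_0$ and the range of $Z_0$ vary while simultaneously keeping $\psi(y_0)=0$, and you do not explain how those two requirements can be decoupled.
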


\begin{proof}
We first assume that the risk aversion $A$ satisfies  $A(1/m)\neq A(1/k)$, for some $m$ and $k$ in $\mathbb N$.
As we justify at the end of the proof, this is without loss of generality. 

Let us suppose that the sample space $\Omega = \{\omega_0, \omega_1,\dots\}$, and consider a one-period  model, where the market consists of a money market account with $0$ interest rate and a stock, with $S_0 = 1$ and $S_1(\omega_0) = 2$, $S_1(\omega_n) = \frac{1}{n}$, $n\in\mathbb N.$


We are going to construct  probabilities $p_n :=\mathbb P[\omega_n]>0$, $n\geq 0$,  satisfying the following three properties
\begin{equation}\label{finite-needed}
\mathbb E\left[-U''(S_1)\right]<\infty,
\end{equation}
\begin{equation}\label{6281}
\mathbb E\left[U'(S_1) S_1\right] = \mathbb E\left[U'(S_1)\right]<\infty,
\end{equation}
and
\begin{equation} \label{non-constant-A}
\mathbb E\left [U'(S_1)(1-S_1) A(S_1)\right]\not =0,
\end{equation}
where $A$ is defined in \eqref{A-function}. Note that, relations \eqref{6281} and \eqref{non-constant-A} can hold together only if the function $A$ is non-constant.

Direct computations show  that  \eqref{non-constant-A} holds if and only if
\begin{equation}\label{6282}
\widehat \Delta:=-\frac{\mathbb E\left[ U''(S_1)(S_1 -1)\right]}{\mathbb E\left[ U''(S_1)(S_1 -1)^2\right]}\neq1.
\end{equation}
Furthermore, note that $\widehat \Delta\in(-1,2)$. 

In addition to \eqref{finite-needed}, \eqref{6281}, and \eqref{non-constant-A}, we will show that there exists $\bar\varepsilon\in\left(0,\frac12\right]$, such that for a random variable defined as 
\begin{displaymath}
\begin{split}
G(\omega):=&
\min\limits_{\varepsilon\in[0, \bar \varepsilon]}U''\left(S_1(\omega)+\varepsilon(1 +\widehat \Delta (S_1(\omega) - 1))\right)1_{ \{\widehat \Delta <1\}}\\
&+\min\limits_{\varepsilon\in[-\bar\varepsilon, 0]}U''\left(S_1(\omega)+\varepsilon(1 +\widehat \Delta (S_1(\omega) - 1))\right)1_{ \{\widehat \Delta>1\}},\quad \omega\in\Omega,
\end{split}
\end{displaymath} satisfies
\begin{equation}\label{711}
G\in\mathbb L^1(\mathbb P).
\end{equation}

Assuming for now that such probabilities indeed exist, we show that under \eqref{6281} and stock as above, $\mathbb P$ is not a martingale measure for $S$, and we have
\begin{equation}\label{712}
2>\mathbb E\left[S_1\right]> 1.
\end{equation}
Indeed, the monotonicity of $U'$ yields
\begin{displaymath}
\begin{split}
&\mathbb E\left[1-S_1\right] = \mathbb E\left[(1- S_1)1_{\{S_1<2\}} \right] - \mathbb P[\{S_1 = 2\}] \\
<&\mathbb E\left[ \frac{U'(S_1)}{U'(2)}(1 -S_1)1_{\{S_1<2\}} \right] - \mathbb P[\{S_1 = 2\}]\\
=&\frac{1}{U'(2)}\left(\mathbb E\left[ U'(S_1)(1 -S_1)1_{\{S_1<2\}} \right] + {U'(2)}\mathbb E[(1-S_1)1_{\{S_1 = 2\}}]\right) \\
=& \frac{1}{U'(2)}\mathbb E\left[ U'(S_1)(1 -S_1)\right] = 0,
\end{split}
\end{displaymath} 
where in the last equality we used \eqref{6281}. This implies 
\eqref{712}, where the upper bound is also strict as $p_n>0$, for every $n\geq 0$. Thus, $\mathbb P$ is not a martingale measure for $S$. Therefore, the constant-valued process $Z\equiv 1$ is not an element of $\mathcal Y(1)$, and thus it is not the dual minimizer for $y=1$.

Furthermore, we claim that \eqref{finDual} holds. 
This is rather clear by observing that, for $n_0$ large enough, one can choose a martingale measure $\mathbb Q$  that changes the probabilities only for $\omega _0, \omega _1, \dots, \omega _{n_0}$ for some $n_0$, but keeps the same  probabilities for $\omega _n, n>n_0$. As the density  $Z\in \mathcal{Y}(1)$ of such a martingale measure is bounded below away from $0$, \eqref{finDual} holds. 

Next, we construct appropriate probabilities $p_n$'s (such that  \eqref{6281},  \eqref{711}, and \eqref{6282} hold; 
note that one might need to perturb finitely many of these $p_n$'s later such that \eqref{8301} below holds too). For this, we set
\begin{equation}\label{725}
p_n := \frac{1}{2^{n+1}}\frac{\min(1, U'(2))}{\max\left(1,U'\left(\tfrac{1}{n}\right) -\min\limits_{s\in\left[\frac{2}{3n},\frac{2}{3} + \frac{2}{3n}\right] }U''\left(s\right)\right)},\quad n\geq  2.
\end{equation}
Note that $$\min\limits_{s\in\left[\frac{2}{3n},\frac{2}{3} + \frac{2}{3n}\right] }U''\left(s\right)\leq \min\limits_{z\in[0,1/3]}\min\limits_{\Delta \in[-1,1]}U''(1/n  + z(1 +\Delta (1/n-1)))$$ 
and $$\min\limits_{s\in\left[\frac{2}{3n},\frac{2}{3} + \frac{2}{3n}\right] }U''\left(s\right)\leq\min\limits_{z\in[-1/3,0]}\min\limits_{\Delta \in[1,2]}U''(1/n  + z(1 +\Delta (1/n-1))).$$ The intuition behind the exact form of the intervals above comes from taking $\bar  \varepsilon = 1/3$ in the construction of $G$ satisfying \eqref{711} when  $\widehat \Delta$ is not fixed yet.

Then, as $S_1>1$ only for $\omega_0$, we~have 
\begin{displaymath}
\begin{split}
&0\leq\mathbb E\left[U'(S_1)S_1 \right]\leq U'(2) + \mathbb E\left[U'(S_1) \right] \leq 2U'(2) + U'(1) \\
&+ \sum\limits_{n\geq 2}\frac{1}{2^{n+1}}\frac{U'\left(\frac{1}{n}\right)}{\max\left(U'\left(\tfrac{1}{n}\right) -\min\limits_{s\in\left[\frac{2}{3n},\frac{2}{3} + \frac{2}{3n}\right] }U''\left(s\right),1\right)}<\infty,
\end{split}
\end{displaymath}
and, the finiteness in \eqref{6281} holds (regardless of the choice of $p_0$ and $p_1$). 

Now, with $p_n$, $n\geq 2$, given by \eqref{725}, we show that we can simultaneously have  \eqref{6281} and 
\eqref{6282}. 
We define 
\begin{equation}\label{741}
\begin{split}
p_0 := &\frac{1}{U'(2)}\sum\limits_{n\geq 2} p_nU'\left(\tfrac{1}{n}\right)\left(1 - \tfrac{1}{n}\right)\\
= &
\frac{\min(U'(2),1)}{U'(2)}
\sum\limits_{n\geq 2} 
\frac{1}{2^{n+1}}\frac{U'\left(\tfrac{1}{n}\right)\left(1 - \tfrac{1}{n}\right)}{\max\left(U'\left(\tfrac{1}{n}\right) -\min\limits_{s\in\left[\frac{2}{3n},\frac{2}{3} + \frac{2}{3n}\right] }U''\left(s\right)  ,1\right)}
 .\\
 \end{split}
\end{equation}
Then, using the above, we rewrite 
\begin{displaymath}
2p_0 U'(2) + \sum\limits_{n\geq 1} p_nU'\left(\tfrac{1}{n}\right)\frac{1}{n} = p_0 U'(2) + \sum\limits_{n\geq 1} p_nU'\left(\tfrac{1}{n}\right),
\end{displaymath}
and \eqref{6281} follows. Thus, \eqref{6281} holds with $p_n$, $n\geq 2$, given by \eqref{725} and $p_0$ specified by  \eqref{741}.
Note that $p_0\leq 1/4$, $\sum\limits_{n\geq 2}p_n\leq 1/4$, and, therefore, $p_1:= 1 - (p_0 +\sum\limits_{n\geq 2}p_n)\geq 1/2$.

To show \eqref{711}, we observe that there exist constants $a$ and $a'$, such that $0<a<a'$ and, for an appropriate $\bar \varepsilon$, we have 
\begin{displaymath}
\begin{split}
0\geq\mathbb E\left[G\right] \geq  &
 2\min\limits_{s\in\left[ a ,a'\right]}U''\left(s\right)  +
 \sum\limits_{n\geq 2}\frac{1}{2^{n+1}}\frac{\min\limits_{s\in\left[\frac{2}{3n},\frac{2}{3} + \frac{2}{3n}\right] }U''\left(s\right)}{\max\left(U'\left(\tfrac{1}{n}\right) -\min\limits_{s\in\left[\frac{2}{3n},\frac{2}{3} + \frac{2}{3n}\right] }U''\left(s\right)  ,1\right)}>-\infty.
\end{split}
\end{displaymath}

Next, we show that \eqref{non-constant-A} holds for the choice of $p_n$'s or for a slightly perturbed choice of $p_n$'s, where the distortion is such that the remaining assumptions of the example do not change. 
To this end, 
we rewrite \eqref{6282} as
\begin{equation}\label{745}
-U''(2) p_0 + \sum\limits_{n\geq 2}U''(\tfrac{1}{n})(1 - \tfrac{1}{n})p_n \neq U''(2) p_0 + \sum\limits_{n\geq 2}U''(\tfrac{1}{n})(1 - \tfrac{1}{n})^2p_n.
\end{equation}
Collecting terms and plugging  the expression for $p_0$ from \eqref{741}, we can rewrite  \eqref{745} as 
\begin{equation}\nonumber
0\neq  -\frac{2U''(2)}{U'(2)}\sum\limits_{n\geq 2} p_nU'\left(\tfrac{1}{n}\right)\left(1 - \tfrac{1}{n}\right) + \sum\limits_{n\geq 2}U''(\tfrac{1}{n})(1 - \tfrac{1}{n})\tfrac{1}{n}p_n
\end{equation}
or, in turn, 
\begin{equation}\label{792}
0\neq \sum\limits_{n\geq 2} p_nU'(\tfrac{1}{n})\left(1 - \tfrac{1}{n}\right)\left(A(2) - A(\tfrac{1}{n})\right).
\end{equation}


We note that, 
if $x\to A(x)$, $x>0$, is strictly monotone\footnote{An  example of an Inada utility function of class $\mathcal {CMIM}$, where the relative  risk aversion  is strictly monotone,  is given  via $-V'(y) = y^{-k}\frac{1}{y+1}$, $y>0$, for some constant $k>0$. Here $-V'$ is completely monotonic  as a product of the completely monotonic functions $y\to y^{-k}$ and $y\to\frac{1}{y+1}$, $y>0$, see \cite[Corollary 1.6]{BookBernsteinFunctions}. Then, the relative risk tolerance at $x=-V'(y)$ is given by $B(y) = -\frac{V''(y)y}{V'(y)} = k + \frac{y}{y+1}$, which is a strictly monotone function of $y$ on $(0,\infty)$. As $A(x)B(y) = 1$ for $y = U'(x)$, we deduce that $A$ is also a strictly monotone function on $(0,\infty)$.}, \eqref{6282} holds, as all terms  under the sum in \eqref{792} are non-zero and of the same sign.

When the relative risk aversion is not monotone, but also non-constant, and if \eqref{792} does not hold, it is enough to perturb finitely many of the $p_n$'s in a way to get simultaneously
\begin{equation}\label{8301}
\begin{split}
&\sum\limits_{n\geq 0} p_n = 1,\\
&\sum\limits_{n\geq 0} p_nU'(s_n) (1-s_n) = 0,\\
&\sum\limits_{n\geq 0} p_nU'(s_n) (1-s_n)A(s_n)\neq 0,
\end{split}
\end{equation}
while preserving the positivity of $p_n$'s (here $s_0 = 2$ and $s_n = 1/n$, $n\in\mathbb N$). As $A(1/m)\neq A(1/k)$, for some $m$ and $k$, such a distortion of $p_n$'s exists. As we have only perturbed finitely many $p_n$'s, \eqref{711} still holds. This results in the choice of a probability measure, such  that \eqref{6281}, \eqref{6282}, and \eqref{711} hold.

We show that $u''(1)$ does not exist. First, we will assume that in \eqref{6282}, the left-hand side is strictly less than $1$, i.e., 
\begin{equation}\label{724}
-\frac{\mathbb E\left[ U''(S_1)(S_1 -1)\right]}{\mathbb E\left[ U''(S_1)(S_1 -1)^2\right]}<1.
\end{equation}
As for every $x\geq 0$, $\{x + \Delta (S_1 - 1):~\Delta \in [-x,x]\}$ is the set of terminal values of the elements of $\mathcal X(x)$, we observe that buying the portfolio consisting of one share of stock is admissible for $x =1$. Then, by conjugacy, we have
\begin{equation}\label{6284}
\begin{split}
\mathbb E\left[ U(S_1) \right] = &~\mathbb E\left[ V(U'(S_1))+ U'(S_1)S_1\right].\\
\end{split}
\end{equation}
Using \eqref{6281} and with an arbitrary $\Delta \in[-1,1]$, we can rewrite the latter expression as 
\begin{equation}\label{6285}
\begin{split}
\mathbb E&\left[ V(U'(S_1))+ U'(S_1)S_1\right] = \mathbb E\left[ V(U'(S_1))+ U'(S_1)\right] \\
&= \mathbb E\left[ V(U'(S_1))+ U'(S_1)(1 + \Delta (S_1-1))\right] 
\geq \mathbb E\left[U(1 + \Delta (S_1-1))\right].
\end{split}
\end{equation}
Combining \eqref{6284} and \eqref{6285}, we get
\begin{equation}\nonumber
\mathbb E\left[ U(S_1) \right]  \geq \mathbb E\left[U(1 + \Delta (S_1-1))\right], \quad \Delta \in[-1,1],
\end{equation}
which yields that $\widehat  X(1) = S$. 

In turn, by the relations  between the primal and dual optimizers, we get
\begin{equation}\label{6291}
u'(1) = \mathbb  E\left[S_1U'(S_1)\right] = \mathbb  E\left[U'(S_1)\right],
\end{equation}
where the  second equality follows from \eqref{6281}. 

For $\varepsilon$ being small, Taylor's expansion yields
\begin{equation}\label{6292}
U(\widehat X_1(1 + \varepsilon)) - U(S_1) = \varepsilon U'(S_1) \frac{\widehat X_1(1 + \varepsilon) - S_1}{\varepsilon} + \frac{\varepsilon^2}{2}U''(\eta(\varepsilon))\frac{\left(\widehat X_1(1 + \varepsilon) - S_1\right)^2}{\varepsilon^2},
\end{equation}
where $\eta(\varepsilon)$ is a random  variable taking values between $S_1$ and $\widehat X_1(1+\varepsilon)$. Therefore, from \eqref{6291} and \eqref{6292}, we obtain
\begin{equation}\label{6293}
\begin{split}
\tfrac{2}{\varepsilon^2}\left(u(1 + \varepsilon) - u(1) - \varepsilon u'(1)\right)
= &~
\frac{2}{\varepsilon}\mathbb E\left[ U'(S_1) \left(\frac{\widehat X_1(1 + \varepsilon) - S_1}{\varepsilon} - 1\right)\right]
\\&+\mathbb E\left[ U''(\eta(\varepsilon))\frac{\left(\widehat X_1(1 + \varepsilon) - S_1\right)^2}{\varepsilon^2}\right].\\
\end{split}
\end{equation}
Let us consider the first term in the right-hand side of \eqref{6293},
\begin{equation}\label{6295}
\frac{2}{\varepsilon}\mathbb E\left[ U'(S_1) \left(\frac{\widehat X_1(1 + \varepsilon) - S_1}{\varepsilon} - 1\right)\right].
\end{equation}
As $\widehat X_1(1 + \varepsilon) = 1+\varepsilon + \widehat\Delta(1+\varepsilon) (S_1 - 1)$, for some (fixed and nonrandom) $\widehat\Delta(1+\varepsilon)\in[-1-\varepsilon, 1 + \varepsilon]$, we can rewrite \eqref{6295} as
\begin{equation}\label{6296}
\begin{split}
&\frac{2}{\varepsilon}\mathbb E\left[ U'(S_1) \left(\frac{1+\varepsilon + \widehat\Delta(1+\varepsilon) (S_1 - 1) - S_1}{\varepsilon} - 1\right)\right] \\
=&~ \frac{2}{\varepsilon}\mathbb E\left[ U'(S_1) \left(\frac{1+\varepsilon - S_1-\varepsilon}{\varepsilon}\right)\right]
+\frac{2}{\varepsilon^2}\widehat\Delta(1+\varepsilon)\mathbb E\left[ U'(S_1) \left(S_1 - 1\right)\right]\\
=&~\frac{2}{\varepsilon^2}\mathbb E\left[ U'(S_1) \left({1- S_1}\right)\right]
+\frac{2}{\varepsilon^2}\widehat\Delta(1+\varepsilon)\mathbb E\left[ U'(S_1) \left(S_1 - 1\right)\right]\\
=&~0,
\end{split}
\end{equation}
where in the last  equality we used \eqref{6281}. Therefore,   \eqref{6293} becomes 
\begin{equation}\label{6297}
\begin{split}
\tfrac{2}{\varepsilon^2}\left(u(1 + \varepsilon) - u(1) - \varepsilon u'(1)\right)
= &~
\mathbb E\left[ U''(\eta(\varepsilon))\frac{\left(\widehat X_1(1 + \varepsilon) - S_1\right)^2}{\varepsilon^2}\right].  \\
\end{split}
\end{equation}
Next, we look at
\begin{equation}\label{6299}
\limsup\limits_{\varepsilon\uparrow 0}\mathbb E\left[ U''(\eta(\varepsilon))\frac{\left(\widehat X_1(1 + \varepsilon) - S_1\right)^2}{\varepsilon^2}\right].
\end{equation}
Using that $u(1)<\infty$, \cite[Lemma 3.6]{KS} together with the symmetry between the primal and dual problems (see the abstract theorems from \cite{Mostovyi2015}) allow to conclude that $\widehat X_1(1+\varepsilon)\to \widehat X_1(1) = S_1$, as $\varepsilon\to 0$, in probability. Consequently, since $\eta(\varepsilon)$ takes values between $\widehat X_1(1)$ and $\widehat X_1(1+\varepsilon)$, we deduce that $\eta(\varepsilon)\to S_1$ in probability. Passing to the limit along a subsequence in \eqref{6299}, and applying Fatou's lemma, we get
\begin{equation}\label{63010}
\limsup\limits_{\varepsilon\uparrow 0}\mathbb E\left[ U''(\eta(\varepsilon))\frac{\left(\widehat X_1(1 + \varepsilon) - S_1\right)^2}{\varepsilon^2}\right] 
\leq 
\sup\limits_{\widetilde \Delta\geq 1}\mathbb E\left[ U''(S_1)\left( \widetilde \Delta S_1 +1 - \widetilde \Delta\right)^2\right],
\end{equation}
where  we used the representation $\frac{\widehat X_1(1 + \varepsilon) - S_1}{\varepsilon} = \widetilde \Delta S_1 +1 - \widetilde \Delta$, for some constant $\widetilde \Delta$, which,  for $\varepsilon\in(-1,0)$,  is bounded from below by $1$,  and the observation that, on  $\widetilde \Delta > 2$,  $U''(\eta(\varepsilon))\left( \widetilde \Delta S_1 +1 - \widetilde \Delta\right)^2$  is  monotone in $\widetilde  \Delta$, for every sufficiently small and negative $\varepsilon$  and for every $\omega$.  

Combining \eqref{6297} and  \eqref{63010}, we deduce that 
\begin{equation}\label{62911}
\limsup\limits_{\varepsilon\uparrow 0}\tfrac{2}{\varepsilon^2}\left(u(1 + \varepsilon) - u(1) - \varepsilon u'(1)\right)
\leq 
\sup\limits_{\widetilde \Delta\geq 1}\mathbb E\left[ U''(S_1)\left(\widetilde \Delta S_1 +1 - \widetilde \Delta\right)^2\right].
\end{equation}
On the other hand, there exists a constant $\varepsilon_0'>0$, such that for every $\bar\Delta \in [-1-2/\varepsilon_0',1]$, we have that 
\begin{equation}\label{8261}
\mathcal X(1+\varepsilon)\ni  X^{\varepsilon,\bar\Delta} := 1+\varepsilon + (1+ \bar\Delta \varepsilon)(S-1),\quad for~every\quad\varepsilon \in(0,\varepsilon_0'].
\end{equation}
In particular, for every $\bar \Delta<1$, we can choose  $\varepsilon_0'$ such that \eqref{8261} holds.
We then obtain
\begin{equation}\label{6302}
\begin{split}
&\liminf\limits_{\varepsilon\downarrow 0}\tfrac{2}{\varepsilon^2}\left(u(1 + \varepsilon) - u(1) - \varepsilon u'(1)\right)\\
 \geq&
  \liminf\limits_{\varepsilon\downarrow 0}\tfrac{2}{\varepsilon^2}\left(\mathbb E\left[U\left(X^{\varepsilon,\bar\Delta}_1\right)\right] - u(1) - \varepsilon u'(1)\right), 
  \end{split}
\end{equation}
where $\mathbb E\left[U\left(X^{\varepsilon,\bar\Delta}_1\right)\right]$ is well-defined see the justification in Remark \ref{remInt}. 
Since $$X^{\varepsilon,\bar\Delta}=S + (\bar\Delta\varepsilon) S  +  \varepsilon (1-\bar\Delta),$$ applying Taylor's expansion once more in \eqref{6302}  gives
\begin{equation}\label{6301}
\begin{split}
\frac{2}{\varepsilon^2}\left(\mathbb E\left[U\left(X^{\varepsilon,\bar\Delta}_1\right)\right] - u(1) - \varepsilon u'(1)\right) 
= &\frac{2}{\varepsilon^2}\left(\varepsilon(\bar\Delta - 1)\mathbb E\left[U'(S_1)(S_1 - 1)\right] \right.\\
&\left.+ \frac{1}{2}\mathbb E\left[U''(\widetilde\eta(\varepsilon))(\bar\Delta\varepsilon S_1 + \varepsilon(1-\bar\Delta))^2\right]\right) \\
=& \frac{1}{\varepsilon^2}\mathbb E\left[U''(\widetilde\eta(\varepsilon))(\bar\Delta\varepsilon S_1 + \varepsilon(1-\bar\Delta))^2\right],
\end{split}
\end{equation}
for some random variable $\widetilde\eta(\varepsilon)$ taking values between $S_1$ and $X^{\varepsilon, \bar\Delta}$, 
and where in the last equality, we have used \eqref{6281}. 

In particular, for $\bar\Delta = \widehat \Delta=-\frac{\mathbb E\left[ U''(S_1)(S_1 -1)\right]}{\mathbb E\left[ U''(S_1)(S_1 -1)^2\right]}$, where  by assumption \eqref{724}, $\widehat \Delta <1$, we can rewrite the latter expression  in \eqref{6301} as
$$\mathbb E\left[U''(\widetilde\eta(\varepsilon))(\widehat\Delta S_1 + (1-\widehat\Delta))^2\right].$$

Note that the function $f(\bar\Delta):= \mathbb E\left[U''(S_1)(\bar\Delta S_1 + (1-\bar\Delta))^2\right]$, $\bar\Delta\in \mathbb R$, reaches its strict global  maximum at $\widehat\Delta$ defined above. Also, from \eqref{8261}, we deduce that $X^{\varepsilon, \bar\Delta}\to S_1$ as $\varepsilon\downarrow 0$. Combining \eqref{6302} and \eqref{6301} and using \eqref{711}, for $\bar \Delta = \widehat \Delta$, we deduce that
\begin{equation}\label{6304}
\begin{split}
&\liminf\limits_{\varepsilon\downarrow 0}\tfrac{2}{\varepsilon^2}\left(u(1 + \varepsilon) - u(1) - \varepsilon u'(1)\right)
\geq 
\mathbb E\left[U''(S_1)(\widehat\Delta S_1 + 1-\widehat\Delta)^2\right].
 \end{split}
 \end{equation}
Therefore, from \eqref{63010} and \eqref{6304}, we conclude that
\begin{equation}\nonumber
\begin{split}
 &\liminf\limits_{\varepsilon\downarrow 0}\tfrac{2}{\varepsilon^2}\left(u(1 + \varepsilon) - u(1) - \varepsilon u'(1)\right)
  \geq
\mathbb E\left[U''(S_1)(\widehat\Delta S_1 + 1-\widehat\Delta)^2\right] \\
= &~ \sup\limits_{\bar \Delta\in\mathbb  R}\mathbb E\left[ U''(S_1)\left( \bar \Delta S_1 +1 - \bar \Delta\right)^2\right]
>  \sup\limits_{\bar \Delta\geq 1}\mathbb E\left[ U''(S_1)\left( \bar \Delta S_1 +1 - \bar \Delta\right)^2\right] \\
\geq&~  \limsup\limits_{\varepsilon\uparrow 0}\tfrac{2}{\varepsilon^2}\left(u(1 + \varepsilon) - u(1) - \varepsilon u'(1)\right),
 \end{split}
 \end{equation}
which shows that 
$u''(1)$ does not exist in the case when $\widehat \Delta <1$, where $\widehat \Delta$ is defined in \eqref{6282}. The case when $\widehat \Delta >1$ can be handled similarly.

We conclude justifying why we may assume that, without loss of generality, $A$ satisfies  $A(1/m)\neq A(1/k)$, for some $m$ and $k$ in $\mathbb N$.
Indeed, for a given utility function $U$, let us consider the family $U_\lambda :=U(\lambda \cdot)$, $\lambda >0$. Then, for a given $\lambda >0$ and every $x>0$, we have
\begin{equation}\label{791}
A_{\lambda}(x) := -\frac{ U_{\lambda}''(x)x}{U_\lambda '(x)}  = -\frac{\lambda^2 U''(\lambda x)x}{\lambda U'(\lambda x)} = A (\lambda x).
\end{equation}
If $A(a)\neq A(b)$ for some $0<a<b$, then, we have,  by \eqref{791}, that $A_{\lambda}(a/\lambda)\neq A_{\lambda}(b/\lambda)$. Therefore, by the choice of $\lambda$, we may assume that 
$$A_{\lambda}(a/\lambda)\neq A_{\lambda}(1/m),$$
for some $m\in\mathbb N$ and where $a/\lambda\in(0,1/m)$. If we add $a/\lambda$ to the range of $S_1$, and assign to this state a  positive but small probability, the arguments above still go through, and imply that $u_\lambda''(1)$ does not exist, where 
$$u_\lambda(x) :=\sup\limits_{X\in\mathcal X(1)}\mathbb E\left[U(\lambda xX_1) \right] = u(\lambda x).$$
Therefore, non-existence of $u_\lambda''(1)$ would imply that $u''(\lambda)$ does not exist either. 
\end{proof}

\section{Second main result: $\mathcal{SD}(2) =\mathcal{SD}(\infty)$}
While condition $\mathcal {SD}(n)$, $n = 2,\dots, \infty$ is the natural condition for differentiability of order $n$, or analyticity, for $n = \infty$, it turns out, when applied to the dual domain, having a maximal element is the same for every $n$. In addition, we have a characterization of such a maximal element. This is related to \cite[Proposition 3.10]{MFT}, but unlike \cite[Proposition 3.10]{MFT}, we do not assume that $\widehat Y$ is a density of a probability measure, as no free lunch with vanishing risk (NFLVR) is not supposed here. Even if NFLVR is assumed, if $\widehat Y$ is maximal in the second-order stochastic dominance, one can conclude that it is a measure. 

We recall that a probability measure $\mathbb Q\sim\mathbb P$ is an {\it equivalent local martingale measure} for $S$  if every $X\in\mathcal X(1)$ is a local martingale under $\mathbb Q$. We will denote the family of equivalent local martingale measures by $\mathcal M^e(S)$. By \cite[Theorem 1.1]{Delbaen-Schachermayer1998}, the celebrated no-free lunch with vanishing risk (NFLVR) condition for $S$  is equivalent to
\begin{equation}\label{NFLVR}
\mathcal M^e_\sigma(S): = \left\{ \mathbb Q\sim\mathbb P:~S~is~a~\sigma-martingale~under~\mathbb Q \right\}\neq \emptyset,
\end{equation}
that is to non-emptiness of the set of {\it equivalent sigma-martingale measures} for $S$. 
Following \cite{Delbaen-Schachermayer1998}, let us also recall that an {\it equivalent separating measure} for $S$ is defined as $\mathbb Q\sim\mathbb P$ such that 
\textcolor{black}{ every $X\in\mathcal X(1)$ is a supermartingale under $\mathbb Q$}. 
$\mathcal M^e_s(S)$ denotes the family of the equivalent separating measures for $S$. 
\begin{rem}\label{remNFLVR}  We recall that, using the Ansel and Stricker Theorem, \cite[Corollary 3.5]{Ansel-Stricker}, one can show that 
$$\mathcal M^e_s(S) \supseteq \mathcal M^e(S) \supseteq \mathcal M^e_\sigma(S)$$
If $\mathcal M^e_s(S)\neq \emptyset$, then $S$ satisfies NFLVR, and thus by \cite[Theorem 1.1]{Delbaen-Schachermayer1998}, we have that $$M^e_\sigma(S)\neq \emptyset.$$ Further, by \cite[Proposition 4.7]{Delbaen-Schachermayer1998}, the density of $\mathcal M^e_\sigma(S)$ in $\mathcal M^e_s(S)$ 
implies that, for every non-negative random variable $g$, we have 
$$\sup\limits_{\mathbb Q\in\mathcal M^e_s(S)} \mathbb {E^Q}[g] = \sup\limits_{\mathbb Q\in\mathcal M^e_\sigma(S)} \mathbb {E^Q}[g],$$
see the proof of \cite[Theorem  5.12]{Delbaen-Schachermayer1998}.
\end{rem}
\begin{prop}
Let us consider a financial model for which  \eqref{NUPBR} holds\footnote{In particular, we do not suppose \eqref{NFLVR}.}. Then the following conditions are equivalent: 
\begin{enumerate}[label=\arabic*.]
\item $\mathcal{SD}(\infty):~\widehat Y_T\succeq_\infty Y_T$, for every $Y\in\mathcal Y(1)$;
\item 
$\widehat Y_T \geq \mathbb E\left[Y_T|\sigma (\widehat Y_T)\right]$, for every $Y\in\mathcal Y(1)$;
\item $\mathcal {SD}(2):~\widehat Y_T\succeq_2 Y_T$, for every $Y\in\mathcal Y(1)$. 
\end{enumerate}
In addition if NFLVR holds, then:
$\mathbb E\left[ \widehat Y_T\right] = 1$ and 
the probability measure $\widehat {\mathbb Q}$ defined through its derivative as $\frac{d\widehat{\mathbb  Q}}{d\mathbb P} := \widehat Y_T$ is a separating measure in the terminology of Delbaen and Schachermayer \cite{Delbaen-Schachermayer1998}, that is $\widehat {\mathbb Q}\in\mathcal M^e_s(S)$.
\end{prop}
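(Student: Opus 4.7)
I would prove the equivalences via the cycle $(3)\Rightarrow(1)\Rightarrow(2)\Rightarrow(3)$, with the main work in $(1)\Rightarrow(2)$. The implication $(3)\Rightarrow(1)$ is immediate from inclusion of test-function classes: any $W\in\mathcal D$ is convex, decreasing, and bounded below, hence a valid test function for $\succeq_2$ by Proposition~\ref{char:fin-order}, so (3) applied to such $W$ furnishes (1) via Proposition~\ref{char:inf-order}. For $(2)\Rightarrow(3)$ and $W\in\mathcal D(2)$ satisfying the required integrability, conditional Jensen gives
\[
\mathbb E[W(Y_T)\mid\sigma(\widehat Y_T)]\geq W(\mathbb E[Y_T\mid\sigma(\widehat Y_T)])\geq W(\widehat Y_T),
\]
using successively the convexity of $W$ and then the combination of (2) with the monotonicity of $W$; taking expectation completes (3).

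For the core implication $(1)\Rightarrow(2)$, fix $Y\in\mathcal Y(1)$. Convexity of $\mathcal Y(1)$ combined with (1) applied to $\lambda\widehat Y+(1-\lambda)Y\in\mathcal Y(1)$ gives $\mathbb E[e^{-a\widehat Y_T}]\leq\mathbb E[e^{-a(\lambda\widehat Y_T+(1-\lambda)Y_T)}]$ for all $\lambda\in[0,1]$, $a>0$. Differentiating at $\lambda=1$ (a minimizer, since equality holds there) produces the basic variational inequality
\[
\mathbb E\bigl[e^{-a\widehat Y_T}(\widehat Y_T-Y_T)\bigr]\geq 0,\quad a>0. \qquad (\ast)
\]
My goal is to upgrade $(\ast)$ to the set-localized form
\[
\mathbb E\bigl[e^{-a\widehat Y_T}\,\mathbb 1_B(\widehat Y_T)(\widehat Y_T-Y_T)\bigr]\geq 0 \quad\text{for every Borel }B\subset[0,\infty),\ a>0. \qquad (\ast\ast)
\]
Once $(\ast\ast)$ is in hand, letting $B$ vary over Borel sets forces the density $e^{-az}(z-g(z))$ with respect to the law $\mu$ of $\widehat Y_T$ to be $\mu$-a.e.\ nonnegative (here $g(z):=\mathbb E[Y_T\mid\widehat Y_T=z]$); since $e^{-az}>0$, this yields $g(z)\leq z$ $\mu$-a.e., which is (2).

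The natural candidate yielding $(\ast\ast)$ is the auxiliary deflator $\widetilde Y_T:=\psi(\widehat Y_T)Y_T+(1-\psi(\widehat Y_T))\widehat Y_T$ for a Borel $\psi:[0,\infty)\to[0,1]$: if $\widetilde Y_T$ lies in $\mathcal Y_T(1)$, then $(\ast)$ applied to $\widetilde Y$ yields the matched inequality $\mathbb E[e^{-a\widehat Y_T}\psi(\widehat Y_T)(\widehat Y_T-Y_T)]\geq 0$, and specializing to $\psi=\mathbb 1_B$ gives $(\ast\ast)$. The admissibility $\widetilde Y_T\in\mathcal Y_T(1)$ is the \emph{principal technical hurdle}. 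Solidness of $\mathcal Y_T(1)$---a consequence of the bipolar theorem for supermartingale deflators under NUPBR, cf.\ \cite{Mostovyi2015}---places each summand $(1-\psi(\widehat Y_T))\widehat Y_T$ and $\psi(\widehat Y_T)Y_T$ individually in $\mathcal Y_T(1)$, but convexity alone yields only $\tfrac12\widetilde Y_T\in\mathcal Y_T(1)$ rather than $\widetilde Y_T$ itself, since the weight $\psi(\widehat Y_T)$ is $\sigma(\widehat Y_T)$-measurable rather than a constant. Bridging this gap is where the simultaneous change of measure and num\'eraire mentioned before the statement enters: normalizing $\widehat Y$ via $d\widehat{\mathbb P}\propto\widehat Y_T\,d\mathbb P$ (with an approximation step to handle the case where $\widehat Y$ is a strict supermartingale) and passing to the corresponding num\'eraire reformulates the admissibility into a tractable statement that can be verified first on a dense subfamily of continuous test functions on the one-point compactification $[0,\infty]$. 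There the algebra generated by $\{e^{-az}\}_{a\geq 0}$ is uniformly dense in $C([0,\infty])$ by Stone--Weierstrass (constants are included, points are separated, and the algebra is closed under multiplication), and the monotone class theorem then extends the resulting inequality from continuous to arbitrary bounded Borel $\psi$.

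For the NFLVR addendum, $\mathcal M^e_\sigma(S)\neq\emptyset$ by \cite[Theorem 1.1]{Delbaen-Schachermayer1998}. I would pick $\mathbb Q\in\mathcal M^e_\sigma(S)$ and form $Z_t:=\mathbb E[d\mathbb Q/d\mathbb P\mid\mathcal F_t]$, a $\mathbb P$-martingale in $\mathcal Y(1)$ with $\mathbb E[Z_T]=1$. Applying the now-established (2) to $Z$ and integrating yields $\mathbb E[\widehat Y_T]\geq\mathbb E[\mathbb E[Z_T\mid\sigma(\widehat Y_T)]]=\mathbb E[Z_T]=1$. Combined with the supermartingale bound $\mathbb E[\widehat Y_T]\leq 1$, this forces $\mathbb E[\widehat Y_T]=1$, and by the classical fact that a nonnegative supermartingale with terminal expectation equal to its initial value is a martingale, $\widehat Y$ is in fact a $\mathbb P$-martingale. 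Then $d\widehat{\mathbb Q}/d\mathbb P:=\widehat Y_T$ defines a probability measure on $\mathcal F_T$; for each $X\in\mathcal X(1)$, the $\mathbb P$-supermartingale property of $\widehat YX$ together with the martingale property of $\widehat Y$ yields that $X$ is a $\widehat{\mathbb Q}$-supermartingale, whence $\widehat{\mathbb Q}\in\mathcal M^e_s(S)$.
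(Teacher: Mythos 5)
Your cycle of implications matches the paper's, and the steps $(3)\Rightarrow(1)$ (trivial inclusion of test-function classes via Propositions~\ref{char:fin-order} and~\ref{char:inf-order}), $(2)\Rightarrow(3)$ (conditional Jensen with monotone and convex $W\in\mathcal D(2)$), and the NFLVR addendum (pick a martingale density $Z_T$ with $\mathbb E[Z_T]=1$, integrate $(2)$ to get $\mathbb E[\widehat Y_T]\geq 1$, hence $\widehat Y$ is a true martingale, and deduce supermartingality of every $X\in\mathcal X(1)$ under $\widehat{\mathbb Q}$) are all in line with the paper. The variational inequality $(\ast)$ you derive from $\mathcal{SD}(\infty)$ by differentiating along a convex mixture at $\lambda=1$ is also correct.

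The difficulty is in upgrading $(\ast)$ to the localized form $(\ast\ast)$. You state the admissibility of $\widetilde Y_T=\psi(\widehat Y_T)Y_T+(1-\psi(\widehat Y_T))\widehat Y_T$ as ``the principal technical hurdle,'' but the tools you then invoke---change of measure/num\'eraire, Stone--Weierstrass on the one-point compactification, monotone class---do not, by themselves, resolve it. Solidity and convexity of $\mathcal Y_T(1)$ give at best $\tfrac12\widetilde Y_T\in\mathcal Y_T(1)$, as you note, and applying $(\ast)$ to $\tfrac12\widetilde Y$ does not produce $(\ast\ast)$ because $\widehat Y_T-\tfrac12\widetilde Y_T$ is not a scalar multiple of $\psi(\widehat Y_T)(\widehat Y_T-Y_T)$. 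Passing to the num\'eraire $\widehat X/\widehat x$ turns the candidate into $\psi(\widehat Y_T)\,(Y_T/\widehat Y_T)+(1-\psi(\widehat Y_T))$, which is still a $\sigma(\widehat Y_T)$-measurably--weighted mixture of deflators, and nothing in your outline certifies it as an element of the (normalized) dual set. There is no approximation-by-exponentials route to this membership: density arguments can enlarge a family of \emph{inequalities}, not manufacture a new deflator.

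The paper's proof of $(1)\Rightarrow(2)$ avoids deflator-mixing altogether and works on the primal side. It chooses a bounded $\mathcal{CMIM}$ dual $V$, so that $\widehat X_T=-V'(\widehat Y_T)$, and then perturbs $V$ to $V_\lambda$ with $-V_\lambda'(y)=-V'(y)(1+e^{-\lambda y})$, producing a second optimizer $X^\lambda$ with $X^\lambda_T=\widehat X_T(1+e^{-\lambda\widehat Y_T})$ and sharing the \emph{same} dual optimizer $\widehat Y$ (this is exactly where Lemma~\ref{lem:same-dual} and the $\mathcal{SD}(\infty)$ hypothesis are used). The difference $X^\lambda-\widehat X$ is then an admissible wealth process and, after the change of num\'eraire to $\widehat X/\widehat x$ and measure to $\mathbb R$ with $d\mathbb R/d\mathbb P=\widehat X_T\widehat Y_T/\widehat x$, it \emph{replicates} $e^{-\lambda\widehat Y_T}$ as a bounded stochastic integral. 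Stone--Weierstrass on the Alexandroff compactification and the monotone class theorem are then used to extend replicability from $\{e^{-\lambda z}\}_{\lambda>0}$ to every bounded Borel $h(\widehat Y_T)$. Finally, for bounded $h\geq 0$ with replicating $X\in\mathcal X(x)$, the supermartingale property of $XY$ for $Y\in\mathcal Y(1)$ yields $\mathbb E[\widehat X_T Y_T h(\widehat Y_T)]\leq x=\mathbb E[\widehat X_T\widehat Y_T h(\widehat Y_T)]$, and since $\widehat X_T$ is a strictly positive $\sigma(\widehat Y_T)$-measurable random variable, this gives $\widehat Y_T\geq\mathbb E[Y_T\mid\sigma(\widehat Y_T)]$. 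So the replicability (obtained via the auxiliary utility maximization problems, which your outline does not contain) is the missing ingredient; the variational inequality for deflators that you pursue is a dead end without it.
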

\begin{proof}
The implication $3\Rightarrow 1$ is trivial. Likewise, $2 \Rightarrow 3$ follows from Jensen's inequality. 
For the additional statement, under NFLVR there exists $Y_T = \frac{d\mathbb Q}{d\mathbb P}$, $\mathbb Q\in\mathcal M^e_\sigma(S)$, such that $\mathbb E\left[Y_T\right] = 1$, and thus from $2$, we have 
$$\widehat Y_T\geq \mathbb E\left[Y_T|\sigma(\widehat Y_T)\right].$$
Therefore $\mathbb E\left[\widehat Y_T\right]\geq 1$, and the proof is complete.  

It remains to prove $ 1 \Rightarrow   2$. Assume $\mathcal {SD}(\infty)$ and let $\widehat Y$ be the maximal element in $\mathcal Y(1)$ in the sense of the infinite order stochastic dominance. Let us consider  the dual function $V$ such that the following properties hold:
\begin{enumerate}[label=(\roman*)]
\item $-V'$ is $\mathcal{CM}$;
\item the Inada conditions \eqref{Inada}  hold for $-V$;
\item $V$ is bounded.
\end{enumerate}
%

\begin{rem}
One way of constructing such a $V$ is though a measure $\mu$  such that 
\begin{equation}\label{1292}\mu(\{0\}) = 0,\quad  \quad \mu((0,\infty)) = \infty,\quad and \quad \int\limits_0^\infty\int\limits_0^\infty e^{-yt}\mu(dt)<\infty.
\end{equation}
Then $V$ can be defined (through its derivative) as 
\begin{equation}
\label{1291}
V(y) := \int\limits_y^\infty\int\limits_0^\infty e^{-zt}\mu(dt)dz,\quad y>0.
\end{equation}
Then
$$
-V'(y) = \int\limits_0^{\infty} e^{-yt} \mu (dt),\quad y>0,$$
is demonstratively $\mathcal {CM}$. The Inada conditions \eqref{Inada} and finiteness will hold for $-V$ by \eqref{1292}. Thus properties (i)-(iii) hold. Note that, for the finiteness of $V$, we have 
$$\int\limits_0^\infty\left(\int\limits_0^\infty e^{-yt}\mu(dt)\right)dy = \int\limits_0^\infty\left(\int\limits_0^\infty e^{-yt}dy\right)\mu(dt) =\int\limits_0^\infty \frac 1t \mu(dt).$$
This allows for an explicit choice of $\mu$.  
For example, we can pick $\mu$ given by  
\begin{displaymath}
\mu(dt) = \left\{\begin{array}{ll}t^{-\tfrac 12}dt,& t\geq 1\\
0dt,&t\in(0,1)
\end{array}
\right..
\end{displaymath}

Another way to construct  $V$ satisfying (i)-(iii) is the following: for some constants $a \in(-1,0)$ and $b \in (-\infty, 1)$, we can set 
$$V(y) := \int\limits_y^\infty z^a(z+1)^bdz,\quad y>0.$$  
Then, $-V'(y)  = y^a(y+1)^b,$ $y>0$, is $\mathcal {CM}$ as a product of $\mathcal {CM}$ functions, see \cite[Corollary 1.6]{BookBernsteinFunctions}. 
As  $$0\leq V(y)\leq \lim\limits_{y\downarrow 0}V(y)=\int\limits_0^\infty y^a(y+1)^bdz<\infty,$$ and the Inada conditions \eqref{Inada}  hold for $-V'$, we deduce that properties (i)-(iii) hold. 
\end{rem}

Now, with $V$ as above  for the utility function $U(x) = \inf\limits_{y>0}(V(y) + xy)$, $x>0$,  let us consider \eqref{primalProblem}.  Under \eqref{NUPBR}, its dual is \eqref{dualProblem}, where \eqref{finDual} holds by the boundedness of $V$. Therefore,  the abstract theorems in \cite{Mostovyi2015} apply, and we deduce the existence of  $\widehat x = -v'(1)$, such that 
$\widehat X\in\mathcal X(\widehat x)$ and $\widehat X\widehat Y$ is a true martingale and
\begin{equation}\label{1301}
\widehat X_T = -V'(\widehat Y_T).
\end{equation}  

Let us fix a constant $\lambda>0$ and consider a new dual function defined (on the level of the derivative) as 
\begin{displaymath}
V_\lambda(y) := -\int\limits_y^\infty V'(z)\left(1 + e^{-\lambda z}\right)dz,\quad y>0;
\end{displaymath} 
that is 
\begin{displaymath}
-V'_\lambda(y)  = - V'(y)\left(1 + e^{-\lambda y}\right),\quad y>0.
\end{displaymath}  
As $- V'(y)\leq-V'_\lambda (y)\leq - 2V'(y)$, $y>0$, we deduce that the Inada conditions \eqref{Inada} hold for $-V$ and that $V$ is bounded. Further, $-V'_\lambda$ is $\mathcal {CM}$ by \cite[Corollary 1.6]{BookBernsteinFunctions}. Thus $V_\lambda$ satisfies properties (i)-(iii) above, and therefore for $U_\lambda(x) = \inf\limits_{y>0}\left(V_\lambda(y) + xy\right)$, $x>0$, if we consider the utility maximization problem \eqref{primalProblem} and it dual \eqref{dualProblem}, we deduce from \cite[Theorem 3.2]{Mostovyi2015} that 
$x_\lambda := -v'_\lambda(1)$ is well-defined and that there exists $X^\lambda\in\mathcal X(x_\lambda)$ such that $X^\lambda \widehat Y$ is a $\mathbb P$ martingale and 
\begin{displaymath}
X^\lambda_T = -V'_\lambda (\widehat Y_T) = - V'(\widehat Y_T)\left( 1 + e^{-\lambda \widehat Y_T}\right) = \widehat X_T\left( 1 + e^{-\lambda \widehat Y_T}\right),
\end{displaymath}
is the optimizer to \eqref{primalProblem} for $x = x_\lambda$ and the utility function $U_\lambda$. 

Now, one can see that $$x_\lambda - \widehat x>0 \quad and \quad X^\lambda- \widehat X \in\mathcal  X(x_\lambda - \widehat x).$$
Further, $(X^\lambda- \widehat X )\widehat Y$ is a true $\mathbb P$-martingale and 
\begin{displaymath}
\frac{X^\lambda_T- \widehat X_T}{\widehat X_T} = e^{-\lambda \widehat Y_T}.
\end{displaymath}
Let us consider 
\begin{displaymath}
M  := \frac{X^\lambda  - \widehat X }{\widehat X},
\end{displaymath}
and change num\'eraire to $\frac{\widehat X}{\widehat x}$ and the probability measure to  $\mathbb R$ defined as
\begin{displaymath}
\frac{d\mathbb R}{d\mathbb P} = \frac{\widehat X_T\widehat Y_T}{\widehat x}.
\end{displaymath}
One can see that under the num\'eraire $\frac{\widehat X}{\widehat x}$ and measure $\mathbb R$, the sets of the nonnegative wealth processes and supermartingale deflators are given by
\begin{displaymath}
\begin{split}
\widehat {\mathcal X}(x)& := \frac{\mathcal X(x)}{\widehat X}{x} = \left\{\frac{X}{\widehat X}{\widehat x}=\left(\frac{X_t}{\widehat X_t}{\widehat x}\right)_{t\in[0,T]}:~X\in\mathcal X(x)\right\},\quad x>0;\\
\widehat {\mathcal Y}(y)& := \frac{\mathcal Y(y)}{\widehat Y} = \left\{ \frac{Y}{\widehat Y} = \left(\frac{Y_t}{\widehat Y_t}\right)_{t\in[0,T]}:~Y\in\mathcal Y(y)\right\},\quad y>0.
\end{split}
\end{displaymath}
Let us also denote 
\begin{displaymath}
\widehat S:= \left(\frac{\widehat x}{\widehat X}, \frac{\widehat x S}{\widehat X}\right).
\end{displaymath}

One can see that 
$$M\in\widehat {\mathcal X}\left(\frac{x_\lambda - \widehat x}{\widehat x}\right)$$
and that $M$ is a true $\mathbb R$-martingale such that 
$$M_T = e^{-\lambda \widehat Y_T}.$$
Therefore, $M$ is a bounded replication process for $e^{-\lambda \widehat Y_T}$ under  the num\'eraire $\frac{\widehat X}{\widehat x}$ (and measure $\mathbb R$). 

We deduce that for every constant $\lambda >0$, the option 
$$0\leq f_\lambda (\widehat Y_T) := e^{-\lambda \widehat Y_T}\leq 1$$
is {\it replicable} by a bounded stochastic integral  under the num\'eraire $\frac{\widehat X}{\widehat x}$. Let us work under the measure $\mathbb R$ and num\'eraire $\frac{\widehat X}{\widehat x}$. 
One can see that  $\mathbb R\in\mathcal M^e_s(\widehat S)$. In particular (under the num\'eraire $\frac{\widehat X}{\widehat x}$), we obtain that $$\mathcal M^e_s(\widehat S)\neq \emptyset;$$
%
and we have 
\begin{equation}\label{121}
\mathbb E^{\mathbb R}\left[f_\lambda (\widehat Y_T)\right]= \mathbb E^{{\mathbb Q}}\left[f_\lambda (\widehat Y_T)\right],\quad for~every\quad \lambda >0\quad and\quad{\mathbb Q}\in\mathcal M^e_s(\widehat S).
\end{equation}

Next, from every $f_\lambda$ of the form $f_\lambda(y)= e^{-\lambda y}$, $y> 0$, we will extend \eqref{121} to any 
\begin{equation}\label{124}
h:[0,\infty)\to \mathbb R, \quad h~is~continuous\quad and\quad \lim\limits_{y\uparrow\infty}h(y) = 0.
\end{equation}
The latter property of $h$ allows for the Alexandroff extension to $[0,\infty]$. Equivalently, one can consider 
$$g(y) :=\left\{\begin{array}{ll} h(-\log(y)),& y\in(0,1]\\
0, & y = 0 \\
\end{array}\right.,$$ and observe that $g$ is continuous on $[0,1]$. Therefore, we can uniformly approximate $g$ by the polynomials $P_n$ of the form
$$P_n(y) = a_0 + a_1 y+\dots + a_ny^n, \quad y\in[0,1].$$
Since $g(0) = 0$, we further conclude that $a_0 = 0$. Going back to $h$, we deduce that $h$ can be approximated uniformly on $[0,\infty)$ by the functions of the form
$$a_1 e^{-y}+\dots + a_ne^{-ny} = P_n(e^{-y}), \quad y\geq 0.$$
Consequently, for such an $h$ and any $\mathbb Q\in\mathcal M^e_s(\widehat S)$, we get
\begin{equation}
\label{122}
\begin{split}
&\mathbb {E^R}\left[ h(\widehat Y_T)\right] = \mathbb {E^R}\left[ \lim\limits_{n\uparrow\infty}P_n\left(e^{-\widehat Y_T}\right)\right] 
=\lim\limits_{n\uparrow\infty} \mathbb {E^R}\left[ P_n\left(e^{-\widehat Y_T}\right)\right]=\\
=&\lim\limits_{n\uparrow\infty} \mathbb {E^Q}\left[ P_n\left(e^{-\widehat Y_T}\right)\right] = 
\mathbb {E^Q}\left[\lim\limits_{n\uparrow\infty}  P_n\left(e^{-\widehat Y_T}\right)\right] = \mathbb {E^Q}\left[h\left( \widehat Y_T \right)\right],
\end{split} 
\end{equation}
where we have  used  the dominated convergence theorem and \eqref{121}. 
To recapitulate, we have shown that for any function $h$ satisfying \eqref{124}, we have 
\begin{equation}
\label{123}
\mathbb {E^R}\left[ h(\widehat Y_T)\right] =  \mathbb {E^Q}\left[h ( \widehat Y_T  )\right],\quad 
for ~every~\mathbb Q\in\mathcal M^e_s(\widehat S).
\end{equation}

Next, we observe that  \eqref{123} holds for any function $h:$ $[0,\infty) \to \mathbb R$,  which is {\it smooth and has a compact support} in $(0,\infty)$, as every such function satisfies \eqref{124}. Further, using  truncation and regularization by convolution, one can approximate (in the topology of uniform convergence on compact subsets of $(0,\infty)$) any {\it bounded and continuous} function $(0,\infty) \to \mathbb R$  
by a  uniformly bounded sequence of smooth functions with compact support in $(0,\infty)$. Therefore, similarly to the computations in \eqref{122}, we deduce that \eqref{123} holds for every {\it bounded continuous} function $h$: $(0,\infty) \to \mathbb R$.

Let $\mathcal H$ be the set of {\it bounded Borel-measurable} functions $h$: $(0,\infty) \to \mathbb R$, such that \eqref{123} holds. One can see that $\mathcal H$ is a monotone class. By $\mathcal C$  let us denote the set of {\it bounded continuous} functions $(0,\infty) \to \mathbb R$. As $\mathcal C$ is closed under the pointwise multiplication, and $\mathcal C\subseteq\mathcal H$, we deduce from a version of the monotone class theorem, see e.g., \cite[Theorem I.8]{Pr} or \cite[Theorem 21, p. 14]{DelMey78}, that $\mathcal H$ contains all bounded $\sigma(\mathcal C)$-measurable functions.  As $\sigma(\mathcal C)$ is the Borel sigma-field on $(0,\infty)$, we conclude that, for every bounded Borel-measurable function $h$: $(0,\infty) \to \mathbb R$, \eqref{123} holds. Therefore, {\it every  bounded option $h(\widehat Y_T)$ is replicable} under the num\'eraire $\frac{\widehat X}{\widehat x}$.

 Now, let us fix a bounded function $h$: $(0,\infty) \to [0,\infty)$. Then, there exists $x\geq 0$, and $X\in\mathcal X(x)$, such that 
\begin{equation}\label{215}
\widetilde X = \frac{X}{\widehat X}\widehat x\in\widehat{\mathcal X}(x)\quad is~bounded\quad and \quad \widetilde X_T = h(\widehat Y_T).
\end{equation}
We also have 
\begin{equation}\label{1293}
x = \mathbb E^{\mathbb R}\left[h(\widehat Y_T)\right] = \mathbb E\left[ \frac{\widehat X_T}{\widehat x}\widehat Y_T h(\widehat Y_T)\right].
\end{equation}
Now, let us get back to the original num\'eraire and measure: by \eqref{215}, 
we have a process $X\in\mathcal X(x)$, such that $$X_T = \frac{\widehat X_T}{\widehat x}h(\widehat Y_T).$$
Therefore, for any $Y\in\mathcal Y(1)$, we have 
\begin{equation}\label{216}
x\geq \mathbb E\left[X_T Y_T\right] = \mathbb E\left[\frac{\widehat X_T}{\widehat x}h(\widehat Y_T) Y_T\right]
\end{equation}
Comparing  \eqref{1293} and \eqref{216}, we deduce, for every $Y\in\mathcal Y(1)$ and every bounded $h\geq 0$, that 
\begin{displaymath}
\mathbb E\left[\widehat X_T\widehat Y_T h(\widehat Y_T)\right] \geq 
\mathbb E\left[\widehat X_T Y_Th(\widehat Y_T)\right].
\end{displaymath}
Consequently, we have $$
\widehat X_T\widehat Y_T \geq \mathbb E\left[ \widehat X_T\widehat Y_T |\sigma (\widehat Y_T )\right].$$
We recall that by \eqref{1301}, $\widehat X_T = -V'(\widehat Y_T)$, and thus $\widehat X_T$ is $\sigma (\widehat Y_T )$-measurable. We conclude that
$$\widehat Y_T \geq \mathbb E\left[ Y_T|\sigma (\widehat Y_T )\right],$$
for every $Y\in\mathcal Y(1)$.
~ This completes the proof.
\end{proof}
\bibliographystyle{alpha}
\bibliography{literature1}

\end{document}